\documentclass[12pt,a4paper]{article}
\usepackage[utf8]{inputenc}
\usepackage{amsmath}
\usepackage{amsfonts}
\usepackage{amssymb}
\usepackage{amsthm}
\usepackage{color}
\usepackage{natbib}
\usepackage[left=1.1cm, right=1.1cm]{geometry}
\usepackage{pgf,tikz}
\usepackage[ruled,noend]{algorithm2e}
\usepackage{graphicx}
\usepackage{subfig}
\usepackage{xr-hyper}
\usepackage{placeins}
\usepackage{hyperref}
\usepackage{authblk}

\newtheorem{definition}{Definition}
\newtheorem{proposition}{Proposition}

\newtheorem{lemma}{Lemma}
\newtheorem{theorem}{Theorem}

\DeclareMathOperator{\pa}{pa}

\DeclareMathOperator{\an}{an}
\DeclareMathOperator{\de}{de}

\DeclareMathOperator{\cn}{cn}
\DeclareMathOperator{\nd}{nd}
\DeclareMathOperator{\forb}{forb}
\DeclareMathOperator{\ignore}{ignore}

\begin{document}

\title{A note on efficient minimum cost adjustment sets in causal graphical models}
\author[1]{Ezequiel Smucler \thanks{ezequiels.90@gmail.com}}
\affil[1]{Glovo}
\author[2]{Andrea Rotnitzky \thanks{arotnitzky@utdt.edu}}
\affil[2]{Universidad Torcuato Di Tella and Harvard T.H. Chan School of Public Health}

\maketitle

\begin{abstract}
We study the selection of adjustment sets for estimating the interventional
mean under an individualized treatment rule.  We assume a non-parametric
causal graphical model with, possibly, hidden variables and at least one
adjustment set comprised of observable variables.  Moreover, we assume that observable variables have positive costs associated with them.   We define the cost of an observable adjustment set as the sum of the costs of the variables that comprise it.
We show that in this setting there exist adjustment sets that are minimum cost optimal,  in the
sense that they yield non-parametric estimators of the interventional mean with the
smallest asymptotic variance  among those that
control for observable adjustment sets that have minimum cost.
Our results are based on the construction of a special flow network associated with the original causal graph.  We show that a minimum cost optimal adjustment set can be found by computing a maximum flow on the network,  and then finding the set of vertices that are reachable from the source by augmenting paths.  The \texttt{optimaladj} Python package implements the algorithms introduced in this paper.
\end{abstract}

\section{Introduction}

This paper contributes to the growing literature on graphical criteria for
selecting adjustment sets that suffice to control for confounding. In a
causal graphical model, a set of covariates $Z$ is an adjustment set for the
effect of a,  possibly individualized,  point exposure treatment rule on an outcome,  if the interventional mean,  i.e. the mean of the
outcome in the hypothetical world in which all units in the population receive a
given treatment rule,  is identified by the g-formula \citep{robins1986} that
adjusts for $Z$.  An individualized point exposure treatment rule is a rule for
assigning subjects to treatment at a single time point that depends on the
subjects' covariates.  This paper contributes to the existing literature by
incorporating the possibility that variables in the graph are associated
with different costs and deriving for such scenarios a graphical criterion
for determining the optimal adjustment set among those satisfying a particular cost constraint. 

The literature on graphical selection of adjustment sets is particularly
relevant for the task of deciding, at the stage of the design of
an observational study, which variables to measure in order to control for
confounding. It is well known that, under a causal graphical model, there
may exist many adjustment sets,  and a series of recent papers  derived
complete and sound graphical criteria for determining them 
\citep{pearl-causality,
kuroki-miyawa, shpitser-adjustment, perkovic2018}. A subsequent thread of
papers provided graphical rules for comparing adjustment sets in causal graphical
models based on efficiency criteria. Specifically, assuming a linear causal
graphical model with no hidden variables and treatment effects estimated by
ordinary least squares, \cite{kuroki-cai} and \cite{kuroki-miyawa} provided
criteria for comparing certain pairs of adjustment sets. \cite{perkovic}
derived a graphical characterization of the globally most efficient
adjustment set and extended the criteria of Kuroki, Cai and Miyakawa. See
also \cite{didelez}. \cite{eff-adj} extended these results to non-parametric
graphical models and non-parametrically adjusted estimators. All of the
aforementioned papers considered only static treatment rules, i.e. `one
size fits all regimes' which assign the same treatment to all population
units regardless of their covariates.   \cite{us_bka} extended the
results of \cite{eff-adj} by allowing the possibility of both individualized
treatment rules and graphical models that have hidden variables.  The authors also
provided graphical criteria for determining optimal adjustment sets
both among  minimal adjustment sets and among minimum cardinality adjustment
sets. Minimal adjustment sets are valid adjustment sets such that the
removal of any variable from them destroys their validity. Moreover, \cite{us_bka} 
 provided a sufficient criterion for the existence of a globally
optimal adjustment set.  \cite{runge} provided a necessary and sufficient
criterion for the existence of a globally optimal adjustment set in linear
causal graphical models.  Assuming a non-graphical online setting in which the
investigator can alter the data collection mechanism adaptively, \cite{malek}
proposed an estimator of an optimal identifying
functional among those in a predefined set.

In most realistic settings the costs associated with measuring different
covariates can vary considerably. For instance, variables requiring the
laboratory assessment of blood samples are usually much more
expensive to measure than variables that can be obtained by clinical examination. The
latter, in turn, are more expensive than those obtained from surveys. Cost
considerations then give rise to the important practical problem of
determining the adjustment set that yields estimators of  treatment
effects with minimum variance among the adjustment sets whose overall cost
meets a given budget constraint. To the best of our knowledge the literature
on causal graphical models has not addressed this problem yet.  The present paper
fills this gap.  In fact, we show by means of an example, that such an ideal
adjustment set does not always exist,  except when the variance minimization problem is
restricted to observable adjustment sets with lowest overall cost.  We
derive a graphical criterion and a polynomial time algorithm for computing
a solution to the latter optimization problem.  We refer to adjustment sets that solve this problem as optimal minimum cost adjustment sets.  Our results are based on building
a special flow network associated to the original causal graph. We show that
optimal minimum cost adjustment sets can be found by computing a maximum
flow on the network, and then finding the set of vertices that are reachable
from the source  by augmenting paths. Flow networks had
already been proposed in \cite{acid} and \cite{vanAI} as a tool to compute minimal and
minimum cost adjustment sets, but with no consideration for statistical
efficiency.
The \texttt{python} package \texttt{optimaladj} available at %
\url{https://pypi.org/project/optimaladj} implements the algorithms
introduced in this paper.

The rest of the paper is organized as follows. In Section \ref{sec:back} we 
review some necessary background on graph theory and semiparametric
estimation. In Section \ref{sec:causal} we present the non-parametric causal
graphical model we will assume throughout the paper. In Section \ref%
{sec:mincost_charact} we provide a graphical characterization of minimum
cost adjustment sets as a class of vertex separators in the undirected graph
introduced in \cite{us_bka}. In Section \ref{sec:nonparam} we provide a
graphical criterion for efficiency comparisons of minimum cost adjustment
sets, based on the aforementioned undirected graph. Section \ref%
{sec:mincost_flow} contains the main results of this paper. In it, we
construct a special flow network and show how min-cuts in it are related to
minimum cost adjustment sets. We also show that an optimal minimum cost
adjustment set always exists, and provide a polynomial time algorithm to
compute it. Finally, we illustrate our results in a few examples and show
that the optimal budget constrained adjustment set discussed earlier does not exist in general.

\section{Background}

\label{sec:back}

\subsection{Undirected graphs}

\label{sec:back_undirected}

An undirected graph $\mathcal{H}=({V},{E})$ is formed by a finite vertex set $%
{V}$ and a set of undirected edges ${E}$.  
A weighted undirected graph is an undirected graph together with a cost function $c: V \to (0, +\infty)$.  The cost of a set of vertices $Z$ is defined as the sum of the costs of the vertices that comprise $Z$.  In a slight abuse of notation we write $c(Z)$ for the cost of $Z$.

If $U - W$ is an edge in $\mathcal{H}$ then we say that $U$ and $W$ are
adjacent.  A path between $U$ and $W$ is a sequence of adjacent
vertices $(V_1, \dots, V_{j})$ such that $V_{1}=U $ and $V_{j}=W$. 
Two vertices $U$ and $W$ are connected in $\mathcal{H}$ if there
exists a path from $U$ to $W$ in $\mathcal{H}$.

For $Z_{1}, Z_{2}$ and $Z_{3}$ disjoint sets of vertices in $\mathcal{H}$,
we write $Z_{1} \perp_{\mathcal{H}} Z_{2} \mid Z_{3}$ if every path in $%
\mathcal{H}$ between $Z_{1}$ and $Z_{2}$ intersects $Z_{3}$. Given two
vertices $A$ and $Y$ in $\mathcal{H}$, a set of vertices $Z$ disjoint with $A
$ and $Y$ is an $A-Y$ separator  if $A \perp_\mathcal{H} Y \mid Z$. The set ${Z} $
is a minimal $A-Y$ separator if it is an $A-Y$ separator and no proper subset of ${Z}$
is an $A-Y$ separator.  If $\mathcal{H}$ is a weighted undirected graph with weight function $c$,  the set ${Z} $ is a minimum cost $A-Y$ separator if it is an $A-Y$ separator that satisfies $c(Z) \leq c(Z^{\prime})$ for any other 
$A-Y$ separator $Z^{\prime}$.

\subsection{Directed graphs}

\label{sec:back_directed}

A directed graph $\mathcal{G}=({V},{E})$ is formed by a  finite vertex set ${V}
$ and a set of directed edges ${E} \subset {V}\times{V}$.  Given ${Z}\subset {V}$ the induced subgraph $\mathcal{G}_{{Z}}=({Z},{E}%
_{Z})$ is defined as the graph obtained by considering only vertices in ${Z}$
and edges between vertices in ${Z}$. 

Two vertices $U$ and $W$ are adjacent if there is an edge between them. A
path between $U$ and $W$ in $\mathcal{G} $ is a sequence of adjacent
vertices $(V_1, \dots, V_{j})$ such that $V_{1}=U$ and $V_{j}=W$. The path
is directed (or causal) if $V_{i}\to V_{i+1}$ for all $i \in \lbrace
1,\dots,j-1\rbrace$.

If $U\rightarrow W$, then $U$ is a parent of $W$. If there is a directed
path from $U$ to $W$, then $U$ is an ancestor of $W$ and $W$ a descendant of 
$U$. We follow the convention that every vertex is an ancestor and a
descendant of itself. The sets of parents, ancestors and descendants of $W$
in $\mathcal{G}$ are denoted by $\pa_{\mathcal{G}}(W)$, $\an_{\mathcal{G}}(W)
$ and $\de_{\mathcal{G}}(W)$ respectively. The set of non-descendants of $W$
is defined as $\nd_{\mathcal{G}}(W)\equiv {V}\setminus \de_{\mathcal{G}}(W) $%
. For a set of vertices ${Z} $ we define $\an_{\mathcal{G}}({Z})=\cup_{W\in{Z%
}} \an_{\mathcal{G}}(W) $ and $\de_{\mathcal{G}}({Z})=\cup_{W\in{Z}} \de_{%
\mathcal{G}}(W) $.


A directed cycle is a directed path that begins and ends at the same vertex.  A directed acyclic graph is a directed graph that does not have 
directed cycles.

Let $\mathcal{G}$ be a directed acyclic graph and let $A$ and $Y$ be
vertices in $\mathcal{G}$. 
Let $\cn(A,Y,\mathcal{G})$ be the set
of vertices that lie on a directed path between $A$ and $Y$ and are not
equal to $A$.  
Let $\forb(A,Y,\mathcal{G})\equiv \de_{\mathcal{G}%
}\left( \cn(A,Y,\mathcal{G})\right) \cup \left\{ A\right\} $.  We call this
the set of forbidden vertices with respect to $A$, $Y$ in $\mathcal{G}$.

The proper back-door graph $\mathcal{G}^{pbd}(A,Y)$ %
\citep{vanAI} is defined as the graph formed by removing from $\mathcal{G}$
the first edge of every directed path from $A$ to $Y$.
%

The moral graph $
\mathcal{G}^{m}$ associated with  a directed acyclic graph $\mathcal{G}$,  is an undirected graph with the same vertex set as $
\mathcal{G}$ and an edge $U- W$ if any of the following hold in $\mathcal{G}$%
: $U\rightarrow W$, $W\rightarrow U$, or there exists a vertex $C$ such that 
$U \rightarrow C \leftarrow W$.

\subsection{Flow networks}
\label{sec:back_flow}
We follow the conventions in \cite{even}.  A flow network $\mathcal{D}=(V,  E,  k,  s,  t)$ is a directed graph together with a capacity function $k: E \to [0,  +\infty]$ and two distinguished vertices $s$ and $t$.  $s$ is called the source and $t$ the sink of the network.  $k(e)$ is called the capacity of edge $e$. Consider then a flow network $\mathcal{D}=(V,  E,  k,  s,  t)$.  For a vertex $W \in V$ we let $\alpha(W)$ be the set of edges that point into $W$,  and $\beta(W)$ be the set of edges that point out of $W$.  For a set of vertices $S \subset V$ we let $\overline{S}=V \setminus S$

A flow is a function $f:E \to \mathbb{R}$ that satisfies $0 \leq f(e) \leq k(e)$ for all $e\in E$ and 
$
\sum_{e \in \alpha(W)} f(e) = \sum_{e \in \beta(W)} f(e)
$
for every vertex $W$ not equal to the source or the sink.  The total flow of $f$ is defined as 
$
\sum_{e \in \alpha(t)} f(e).
$
A flow is called a max-flow if no other flow has a greater total flow.

A cut is a set of vertices that contains the source but not the sink of the network.  If $S$ is a set of vertices we define $(S, \overline{S})$ as the set of edges in $\mathcal{D}$ of the form $U \to W$ for some $U\in S$ and $W \notin S$. 
If $S$ is a cut we define its capacity as
$$
k(S) \equiv \sum\limits_{e\in (S, \overline{S})} k(e).
$$
A cut is called a min-cut if no other cut has a smaller capacity.

\subsection{Semiparametric estimation}
\label{sec:back_semiparam}

An estimator $\widehat{\gamma }$ of a parameter 
$\gamma \left( P\right) $ based on $n$ independent identically distributed
random copies ${V}_{1},\dots ,{V}_{n}$ of ${V}$ is 
asymptotically linear at a probability law $P$ if there exists a random variable $\varphi
_{P}\left( {V}\right) $, called the influence function of $\gamma (P)$,  such that $E_{P}\left\lbrace \varphi _{P}\left( {V}\right) \right\rbrace=0$%
, $var_{P}\left\lbrace \varphi _{P}\left( {V}\right) \right\rbrace<\infty$
and $n^{1/2}\left\{ \widehat{\gamma }-\gamma \left( P\right) \right\}
=n^{-1/2}\sum_{i=1}^{n}\varphi _{P}\left( {V}_{i}\right) +o_{p}(1)$ under $P$%
.  The Central Limit Theorem implies that if  $%
\widehat{\gamma }$ is asymptotically linear,   $n^{1/2}\left\{ \widehat{\gamma }-\gamma
\left( P\right) \right\} $ converges in distribution to a zero mean normal
distribution with variance $var_{P}\left\{ \varphi _{P}\left( {V}_{i}\right)
\right\} $. Given a collection of probability laws $\mathcal{P}$ for ${V}$,
an estimator $\widehat{\gamma }$ of $\gamma \left( P\right) $ is said to be
regular at one $P$ if its convergence to $\gamma \left( P\right) $ is
locally uniform at $P$ in $\mathcal{P}$ \citep{van1998asymptotic}.

\section{Causal graphical models}
\label{sec:causal}

Given a directed acyclic graph $\mathcal{G}$ with vertex set $V$,  we identify $V$ with a random vector. 
The Bayesian Network $\mathcal{M}\left( \mathcal{G}\right) $ is the collection of laws $P$ for $V$ that satisfy the Local Markov Property: $$W\perp \!\!\!\perp \nd_{\mathcal{G}%
}\left( W\right) \text{ }|\text{ }\pa_{\mathcal{G}}\left( W\right) \text{
under }P \text{ for all }W\in {V}.$$  Here $A\perp \!\!\!\perp B|C$ stands for
conditional independence of $A$ and $B$ given $C.$  
Throughout,  we will assume that the law $P$ of $V$ admits a density $f$ with respect to some dominating
measure.  Then the Local Markov Property implies \citep{pearl-causality} 
\begin{equation}
f\left( {v}\right) =\prod\limits_{V_{j}\in {V}}f\left\{ v_{j}\mid \pa_{%
\mathcal{G}}(v_{j})\right\} ,  \label{eq-factorization}
\end{equation}%
where $\pa_{\mathcal{G}}(v_{j})$ is the value taken by $\pa_{\mathcal{G}%
}\left( V_{j}\right)$ when ${V}$ takes the value ${v}$.

In this paper we will assume an agnostic causal  graphical model %
\citep{spirtes, mediation} represented by a directed acyclic graph $\mathcal{%
G}$.  This  model identifies the vertex set of $\mathcal{G}$ with a factual
random vector ${V }$ and assumes that: (i) the law 
$P$ of ${V}$ satisfies $P\in \mathcal{M}\left( \mathcal{G%
}\right) $ and (ii) for any $A\in {V}$, ${L}\subset \nd_{\mathcal{G}}(A)$
and $\pi(A\mid {L})$ a conditional law for $A$ given ${L}$, the intervention
density $f_{\pi}\left( {v}\right) $ of the variables in $\mathcal{G}$ when,
possibly contrary to fact, the value of $A$ is drawn from the law $\pi(A\mid 
{L})$ is given by 
\begin{equation}
f_{\pi}\left( {v}\right) =\pi(a\mid {l})\prod\limits_{V_{j}\in {V}\setminus
\{A\}}f\left\{ v_{j}\mid \pa_{\mathcal{G}}(v_{j})\right\} ,
\label{eq:g-form}
\end{equation}%
where $a$ and ${l}$ are the values taken by $A$ and ${L}$ when ${V}$ takes the value ${v}$.  Formula \eqref{eq:g-form} is known as the g-formula %
\citep{robins1986}.  
 The conditional law 
$\pi$ designates a,  possibly random and individualized,  treatment rule.  A non-random
individualized treatment rule that sets $A=d\left( {L}\right) $ 
corresponds to the point mass conditional law $\pi(a\mid {l})=I_{d({l})}(a).$
In particular, a constant function $d({L})=a$ corresponds to a static
intervention that sets $A=a$.
Throughout the paper we let $Y$ and $A$ be the outcome and treatment of interest respectively.  Let 
$\chi _{\pi }(P;\mathcal{G})$ be the mean of $Y$ under $f_{\pi }$.  We refer
to $\chi _{\pi }(P;\mathcal{G})$ as the inverventional mean under treatment rule $\pi
$.  
By the factorizations $\left( \ref{eq-factorization}\right) $ and $\left( %
\ref{eq:g-form}\right)$,  the Radon-Nykodim theorem gives 
\begin{equation*}
\chi _{\pi }(P;\mathcal{G})=E_{P}\left[ \frac{\pi \left( A\mid {L}\right) }{%
f\left\{ A\mid \pa_{\mathcal{G}}(A)\right\} }Y\right] . 
\end{equation*}
Furthermore, the Local Markov property implies that 
$$\chi _{\pi }(P;\mathcal{%
G}) =E_{P}\left( E_{\pi ^{\ast }}\left[ E_{P}\left\{ Y\mid A,\pa_{\mathcal{G}%
}(A),{L}\right\} \mid \pa_{\mathcal{G}}(A),{L}\right] \right),
$$ 
where $E_{P}\left( \cdot |\cdot \right) $ stands for the conditional mean under $%
P$ and $E_{\pi ^{\ast }}\left( \cdot |\cdot \right) $ stands for the conditional
mean under the conditional law of $A$ given ${L}$ and $\pa_{\mathcal{G}}(A)$
defined as $\pi ^{\ast }\{A\mid \pa_{\mathcal{G}}(A),{L}\}\equiv \pi (A\mid {%
L})$.

We are interested in conducting inference about $\chi _{\pi}(P;\mathcal{G})$ when only
a subset ${N}$ of ${V}$ is observable. The inferential problem is thus
defined by the following assumptions: (i)  $P\in \mathcal{M}\left( \mathcal{G}\right) ,$ (ii) the available data
consists of a random sample from the marginal law of ${N}$ under $P$, (iii)
the parameter of interest is $\chi _{\pi}(P;\mathcal{G})$ and (iv) at least
one observable adjustment set exists.  Adjustment sets will be formally defined in the following section. 
Throughout we will
assume (i) $Y \in \de_{\mathcal{G}}(A)$, (ii)$\left\{ A,Y\right\} \cup {L}%
\subset {N}$,  (iii) ${L}\subset \nd_{\mathcal{G}}\left( A\right) $, and (iv) $A$ takes values in a finite set .
Moreover,  we assume that each observable variable $W\in N$ has an associated positive cost $c(W)$,  that could represent,  for example,  the cost of measuring $W$.

\section{Minimum cost adjustment sets and their graphical characterization}\label{sec:mincost_charact}

\cite{us_bka} gave the following definitions of dynamic  adjustment sets and minimal dynamic adjustment sets in graphs with hidden variables.

\begin{definition}
A set ${Z\subset
V\backslash }\left\{ A,Y\right\} $ is an ${L}-{N}$ dynamic adjustment set
with respect to $A,Y$ in $\mathcal{G}$ if ${L}\subset {Z} \subset N$ and
for all conditional laws $\pi (A\mid {L})$ for $A$ given ${L}$%
, all $P\in \mathcal{M}\left( \mathcal{G}\right) $ and all $y\in \mathbb{R}$%
\begin{align}
&E_{P}\left( E_{\pi ^{\ast }}\left[ E_{P}\left\{ I_{(-\infty ,y]}(Y)\mid A,%
\pa_{\mathcal{G}}(A),{L}\right\} \mid \pa_{\mathcal{G}}(A),{L}%
\right] \right) =  \notag \\
&E_{P}\left( E_{\pi _{{Z}}^{\ast }}\left[ E_{P}\left\{ I_{(-\infty
,y]}(Y)\mid A,{Z}\right\} \mid {Z}\right] \right) ,
\nonumber
\end{align}%
where $\pi _{{Z}}^{\ast }(A\mid {Z})\equiv \pi (A\mid {L%
})$ and, recall, $\pi ^{\ast }\left\{ A\mid \pa_{\mathcal{G}}(A),{L}%
\right\} \equiv \pi (A\mid {L})$.

An  ${L}-{N}$  dynamic adjustment set $Z$ is minimal if no proper subset of $Z$ is  an ${L}-{N}$  dynamic adjustment set .
\end{definition}

This extends the definition of \cite{shpitser-adjustment} and \cite{maathuis2015} to accommodate,  possibly random,  $L$ dependent treatment rules and graphs with hidden variables.  
\cite{us_bka} also provided a characterization of minimal $L-N$  dynamic adjustment sets as minimal $A-Y$ separators in a suitably constructed undirected graph.  We will review this characterization in Section \ref{sec:graph_charact_undirected}.  For conciseness,  in what follows we drop the dynamic apellative and simply write $L-N$ adjustment sets.  Also,  all  $L-N$ adjustment sets are with respect to $A,Y$ in $\mathcal{G}$.

We define the cost of an $L-N$ adjustment set $Z$ as $\sum_{W\in Z} c(W)$,  and in a slight abuse of notation we denote this cost with $c(Z)$.
\begin{definition}
A set $Z$ is a minimum cost ${L}-{N}$  adjustment set if it is an $L-N$ adjustment set that satisfies
$
c(Z) \leq c(Z^{\prime})
$
for all ${L}-{N}$  adjustment sets $Z^{\prime}$.
\end{definition}

Consider the design of a study aimed at estimating the interventional mean under an $L$ dependent treatment rule $\pi$.    Suppose that the investigator has postulated a causal graphical model to this end,  and that due to practical or ethical reasons,  she can only observe a subset ${N}$ of the variables in $\mathcal{G}$.
Suppose further that $\mathcal{G}$ includes at least one $L-N$  adjustment set $Z$.  This implies that $A, Y$ and $Z$
suffice to 
identify the interventional mean $\chi _{\pi }(P;\mathcal{G})$ with the so called g-functional  \citep{robins1986}
\begin{equation*}
\chi _{\pi ,{Z}}(P;\mathcal{G}) \equiv E_{P}\left[ E_{\pi _{{Z}}^{\ast
}}\left\{ E_{P}\left( Y\mid A,{Z}\right) \mid {Z}\right\} \right]  
\end{equation*}
which can then be estimated non-parametrically as further explained in Section \ref{sec:nonparam}.  
For economic reasons, the investigator may then choose to use a minimum cost $L-N$ adjustment set.  If several minimum cost $L-N$  adjustment sets exist then,  as we further discuss in Section \ref{sec:eff_comp},  a reasonable criterion for comparing them is using the variance of the limiting distribution of the resulting non-parametric estimators of the g-functional.  

Our goals in this paper are to:
\begin{enumerate}
\item Provide a graphical characterization of minimum cost $L-N$  adjustment sets.
\item Prove the existence of an optimal minimum cost $L-N$  adjustment set that yields non-parametric estimators of the interventional mean with the
smallest asymptotic variance  among those that
control for minimum cost $L-N$  adjustments and provide a polynomial time graphical algorithm to compute it.
\end{enumerate}
To achieve this,  we will leverage the undirected graph defined in \cite{us_bka},  which we review next.  In what follows,  we will assume that there exists at least one $L-N$ adjustment set in $\mathcal{G}$.

\subsection{Minimum cost adjustment sets and undirected graphs}
\label{sec:graph_charact_undirected}

Let  $\mathcal{H}^{0}\equiv \left\{ \mathcal{G}_{\an_{%
\mathcal{G}}(\{A,Y\}\cup {L})}^{pbd}(A,Y)\right\} ^{m}$ and
$\ignore\equiv \left\{ \an_{\mathcal{G}}(\{A,Y\}\cup 
{L})\setminus \{A,Y\}\right\} \cap \left\{ \left[ V \setminus N \right] \cup \forb(A,Y,\mathcal{G}%
)\right\} . 
$
Thus,  $\ignore$ is the subset of the vertices $\mathcal{H}^{0}$ that are not equal to $A$ or $Y$ and that are either not observable ($  V \setminus N $) or are variables that cannot be members of
any $L-N$ adjustment set ($\forb(A,Y,\mathcal{G}) $,  see \cite{shpitser-adjustment}).

\begin{definition}
\label{def:H} 
The non-parametric adjustment efficiency graph associated with $(A,Y,{L%
},{N})$ in $\mathcal{G}$, denoted with $\mathcal{H}^{1}$ 
is the undirected graph constructed
from $\mathcal{H}^{0}$ by (1) removing all
vertices in $\ignore$, (2) adding an
edge between any pair of remaining vertices if they were connected in $
\mathcal{H}^{0}$ by a path with vertices in $%
\ignore$ and (3) adding an edge between $A$ and each vertex in ${L}$ and between $Y$ and each
vertex in ${L}$.
\end{definition}

In words,  $\mathcal{H}^{1}$ is obtained from $\mathcal{H}^{0}$ by first
performing a latent projection on $V \setminus \ignore$ and then connecting all vertices in $L$ to both $A$ and $Y$. 
Similar constructions were also used in \cite{textor12}
and \cite{van14}.  Note that even though $\mathcal{H}^{0}$ and $\mathcal{H}^{1}$ depend on $A, Y,  L,  N$ and $\mathcal{G}$,  for brevity we omit this dependence in the notation.

Proposition 2 of \cite{us_bka} states that ${Z}$ is a minimal ${L}-{N}$   adjustment
set if and only if $%
{Z}$ is a minimal $A-Y$ separator in $\mathcal{H}^{1}$.  Note that  \cite{us_bka} uses the term cut to refer to what we here call a separator.  Both terms are used in the literature,  and in this paper we prefer to reserve the name cut for the concept in flow networks.  Now,  since all minimum cost  ${L}-{N}$ adjustment sets are minimal ${L}-{N}$ adjustment sets,  Proposition 2 of \cite{us_bka} implies the following.

\begin{lemma}\label{lemma:characterize_mincot}
${Z}$ is a minimum cost ${L}-{N}$  adjustment
set if and only if $%
{Z}$ is a minimum cost $A-Y$ separator in $\mathcal{H}^{1}$
\end{lemma}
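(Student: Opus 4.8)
The plan is to exploit the fact that, although the collection of all $A-Y$ separators in $\mathcal{H}^{1}$ is strictly larger than the collection of $L-N$ adjustment sets (the latter corresponding, by Proposition 2 of \cite{us_bka}, only to the \emph{minimal} separators), the two cost-minimization problems nonetheless have the same optimal value and the same set of optimizers. The two ingredients that make this work are the correspondence in Proposition 2 of \cite{us_bka} between minimal $A-Y$ separators and minimal $L-N$ adjustment sets, and the strict positivity of the cost function $c$.

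First I would record two elementary consequences of positivity. Since $c(W)>0$ for every vertex $W$, the cost is strictly monotone under inclusion: if $Z'' \subsetneq Z'$ then $c(Z'') < c(Z')$. Consequently, any minimum cost $A-Y$ separator must be a minimal $A-Y$ separator, for otherwise it would properly contain an $A-Y$ separator of strictly smaller cost, contradicting minimality of its cost; the same argument (already noted in the text) shows that any minimum cost $L-N$ adjustment set is a minimal $L-N$ adjustment set. I would also use the standard fact that, in a finite graph, every $A-Y$ separator contains a minimal $A-Y$ separator, and every $L-N$ adjustment set contains a minimal $L-N$ adjustment set, obtained by removing vertices one at a time while the defining property is preserved until no further removal is possible.

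With these in hand both implications follow by a sandwiching argument. For the forward direction, suppose $Z$ is a minimum cost $L-N$ adjustment set. Then $Z$ is minimal, so by Proposition 2 of \cite{us_bka} it is a minimal $A-Y$ separator, and in particular an $A-Y$ separator. Given any $A-Y$ separator $Z'$, choose a minimal $A-Y$ separator $Z'' \subseteq Z'$; by Proposition 2 of \cite{us_bka}, $Z''$ is a minimal $L-N$ adjustment set, so $c(Z) \leq c(Z'')$ by cost-minimality of $Z$, while $c(Z'') \leq c(Z')$ by monotonicity. Hence $c(Z) \leq c(Z')$ for every $A-Y$ separator $Z'$, i.e. $Z$ is a minimum cost $A-Y$ separator. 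The reverse direction is symmetric: if $Z$ is a minimum cost $A-Y$ separator then it is minimal, hence an $L-N$ adjustment set by Proposition 2 of \cite{us_bka}; for any $L-N$ adjustment set $Z'$ one picks a minimal $L-N$ adjustment set $Z'' \subseteq Z'$, which is an $A-Y$ separator, and concludes $c(Z) \leq c(Z'') \leq c(Z')$.

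I do not expect a serious obstacle here: the content is entirely in the bookkeeping that relates minimization over the two different collections. The one point that genuinely requires care, and the only place the hypothesis $c>0$ is essential, is the claim that an extremal (minimum cost) element is automatically minimal; without strict positivity one could pad an optimal separator with zero-cost vertices and the equivalence of optimizers would fail. Everything else reduces to Proposition 2 of \cite{us_bka} together with monotonicity of $c$ under inclusion.
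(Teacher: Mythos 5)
Your proof is correct and takes essentially the same route as the paper, which derives the lemma in one line from the observation that strictly positive costs force any minimum cost $L-N$ adjustment set (equivalently, any minimum cost separator) to be minimal, and then invokes Proposition 2 of \cite{us_bka} to identify minimal $L-N$ adjustment sets with minimal $A-Y$ separators in $\mathcal{H}^{1}$. Your write-up merely makes explicit the sandwiching bookkeeping (every $A-Y$ separator, respectively every $L-N$ adjustment set, contains a minimal one of no greater cost) that the paper leaves implicit.
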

In what follows,  for brevity,  all separators in $\mathcal{H}^{1}$ are between $A$ and $Y$.  In the next section we review the aspects of the theory of non-parametric estimation of the g-functional $\chi _{\pi ,{Z}}(P;%
\mathcal{G})$ that are relevant to our derivation of the optimal minimum cost $L-N$ adjustment set.

\section{Non-parametric estimation of the g-functional}
\label{sec:nonparam}

 \cite{AIDS} showed that estimators of $\chi _{\pi ,{Z}}(P;%
\mathcal{G})$ that are regular and asymptotically linear at all $P$ in a model $\mathcal{P}$ that only makes assumptions
on the complexity or smoothness $b\left( A,{Z};P\right) \equiv E_{P}\left( Y|A,{Z}\right) $ and/or $%
f\left( A\mid {Z}\right)$ have a unique influence function $\psi _{P,\pi
}\left({Z};\mathcal{G}\right) $ given by 
\begin{equation}
\psi _{P,\pi }\left( {Z};\mathcal{G}\right) \equiv \frac{\pi (A\mid {L})}{%
f\left( A\mid {Z}\right) }\left\{ Y-b\left( A,{Z};P\right) \right\} +E_{\pi
_{{Z}}^{\ast }}\left\{ b\left( A,{Z};P\right) \mid {Z}\right\} -\chi
_{\pi,Z}\left( P,\mathcal{G}\right) .
\nonumber
\end{equation}%
Note that even though $\psi _{P,\pi }$ is also a function of $A$ and $Y$,  this is not reflected in the notation,  for the sake of brevity.

There exist multiple estimation strategies that that rely on making smoothness or complexity type assumptions on $b\left( A,{Z};P\right)$ and/or $%
f\left( A\mid {Z}\right)$.  We list a few of them next.  The
inverse probability weighted estimator is given by $\widehat{\chi }_{\pi ,IPW}=\mathbb{P}%
_{n}\left\{ \widehat{f}\left( A\mid {Z}\right) ^{-1}\pi (A\mid {L})Y\right\}
,$ where $\widehat{f}\left( A|Z\right) $ is a non-parametric estimator of $f\left( A\mid {Z}\right) $
\citep{hirano}.  The outcome regression estimator is given by $\mathbb{P}_{n}\left[
E_{\pi _{{Z}}^{\ast }}\left\{ \widehat{b}\left( A,{Z}\right) \mid {Z}%
\right\} \right] $ where $\widehat{b}$ is a non-parametric
estimator of $b$ \citep{hahn}.  The doubly-robust estimator %
\citep{vanderlaan, chernozhukov2018double, smucler}, also known as augmented IPW,  uses non-parametric estimators of both $f\left( A\mid 
{Z}\right) $ and $b\left( A,{Z}\right) $.  
Examples of non-parametric estimators of $f\left( A\mid 
{Z}\right) $ and $b\left( A,{Z}\right) $ include series or kernel based estimators,  estimators based on boosted trees and other machine learning techniques.

We will refer to
estimators that are regular and asymptotically linear with unique influence
function $\psi _{P,\pi }\left({Z};\mathcal{G}\right) $ as non-parametric estimators that adjust for $Z$. 
It follows from the
discussion above that if $\widehat{\chi }_{\pi ,{Z}}$ is a  non-parametric estimator that adjusts for $Z$, then
 ${n}^{1/2}\left\{ \widehat{\chi }_{\pi ,{Z}}-\chi _{\pi }\left(
P;\mathcal{G}\right) \right\} $ converges in distribution to $N\left\{
0,\sigma _{\pi ,{Z}}^{2}\left( P\right) \right\} $ where $\sigma _{\pi ,{Z}%
}^{2}\left( P\right) \equiv var_{P}\left\{ \psi _{P,\pi }\left( {Z};\mathcal{%
G}\right) \right\} .$

\subsection{Efficiency comparison of minimum cost adjustment sets}
\label{sec:eff_comp}

Define the following preorder on the class of $L-N$ adjustment sets:
$$
Z_1 \preceq_{{L}} Z_2  \Longleftrightarrow \sigma _{\pi ,{Z_1}}^{2}\left( P\right) \leq \sigma _{\pi ,{Z}_2}^{2}\left( P\right) \text{ for all }\pi(A\mid L) \text{ and all } P\in\mathcal{M(G)}.
$$
In words, $Z_1 \preceq_{{L}} Z_2$ if adjusting for $Z_1$ yields more efficient non-parametric estimators of  $\chi _{\pi }(P;\mathcal{G})$ than adjusting for $Z_2$,  uniformly over all possible treatment rules $\pi$ and laws $P$ in the Bayesian Network $\mathcal{M(G)}$.
Define the following relation between separators in $\mathcal{H}^{1}$:
$$
Z_1 \unlhd_{\mathcal{H}^{1}} Z_2 \Longleftrightarrow  Y\perp_{\mathcal{H}^{1}} Z_2 \setminus Z_1 \mid Z_1 \text{ and } A \perp_{\mathcal{H}^{1}} Z_1 \setminus Z_2 \mid Z_2.
$$
Since,  as stated in Lemma \ref{lemma:characterize_mincot},  minimum 
cost $L-N$ adjustment sets and minimum cost separators in $\mathcal{H}^{1}$ are equivalent,  using Lemma \ref{lemma:characterize_mincot} and Propositions 3 and 5 of \cite{us_bka},  we can deduce the following graphical criterion for comparing minimum cost $L-N$ adjustment sets.
\begin{lemma}\label{lemma_order_implies_var}
Let $Z_1$ and $Z_2$ be minimum cost $L-N$ adjustment sets.  Then
\begin{equation}
Z_1 \unlhd_{\mathcal{H}^{1}} Z_2  \Longrightarrow Z_1 \preceq_{{L}} Z_2.
\label{eq:order_implies_var}
\end{equation}
\end{lemma}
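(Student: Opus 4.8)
The plan is to reduce the statement to the efficiency comparison that is already available for \emph{minimal} adjustment sets in \cite{us_bka}; the only genuinely new ingredient is the observation that a minimum cost separator is automatically minimal, which is where positivity of the cost function enters.

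First I would show that every minimum cost $L-N$ adjustment set is minimal. By Lemma \ref{lemma:characterize_mincot}, $Z_1$ and $Z_2$ are minimum cost $A-Y$ separators in $\mathcal{H}^{1}$. If such a separator $Z$ admitted a proper subset $Z' \subsetneq Z$ that were still an $A-Y$ separator, then, because $c$ takes values in $(0,+\infty)$, we would have $c(Z') = c(Z) - c(Z \setminus Z') < c(Z)$, contradicting the minimum cost property. Hence $Z_1$ and $Z_2$ are minimal $A-Y$ separators in $\mathcal{H}^{1}$, and by Proposition 2 of \cite{us_bka} they are minimal $L-N$ adjustment sets.

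Next I would invoke Propositions 3 and 5 of \cite{us_bka}, which supply the efficiency comparison for minimal separators in $\mathcal{H}^{1}$. Concretely, these propositions express the variance difference $\sigma^2_{\pi,Z_2}(P) - \sigma^2_{\pi,Z_1}(P)$ through the influence functions $\psi_{P,\pi}$ and show that each of the two undirected separation statements defining $Z_1 \unlhd_{\mathcal{H}^{1}} Z_2$, namely $Y \perp_{\mathcal{H}^{1}} Z_2 \setminus Z_1 \mid Z_1$ and $A \perp_{\mathcal{H}^{1}} Z_1 \setminus Z_2 \mid Z_2$, forces the corresponding component of that difference to be non-negative, uniformly over all treatment rules $\pi(A \mid L)$ and all $P \in \mathcal{M}(\mathcal{G})$. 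Since the previous step places $Z_1$ and $Z_2$ inside the class of minimal separators to which these propositions apply, the implication $Z_1 \preceq_{L} Z_2$ follows immediately.

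The point to verify with care is the matching of hypotheses in the middle step: I must confirm that the undirected separation conditions in the definition of $\unlhd_{\mathcal{H}^{1}}$ are exactly the graphical conditions under which Propositions 3 and 5 of \cite{us_bka} deliver their variance inequality, and that those propositions are stated for the same graph $\mathcal{H}^{1}$ and the same notion of minimal separator used here. I expect this identification to hold, since both $\mathcal{H}^{1}$ and the relation $\unlhd_{\mathcal{H}^{1}}$ are inherited directly from \cite{us_bka}, so that no further graph-theoretic translation is required. Positivity of the costs is used \emph{only} in the first step, to promote ``minimum cost'' to ``minimal'' and thereby license the appeal to results proved for minimal sets.
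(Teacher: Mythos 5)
Your proposal is correct and takes essentially the same route as the paper, which derives Lemma \ref{lemma_order_implies_var} (with no separate appendix proof) by combining Lemma \ref{lemma:characterize_mincot} with Propositions 3 and 5 of \cite{us_bka}, after noting that minimum cost $L-N$ adjustment sets are minimal $L-N$ adjustment sets. Your only addition is to make explicit the positivity-of-costs argument behind that minimality claim, which the paper states without proof in the passage preceding Lemma \ref{lemma:characterize_mincot}.
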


We now argue why asymptotic efficiency is a reasonable basis for comparing minimum cost adjustment sets,  in the context of designing a planned study where the cost associated with each observable variable in the graph reflects the cost of measuring the variable  on one subject. 
Consider two minimum cost adjustment sets $Z_1$ and $Z_2$, and let $\widehat{\chi }_{\pi ,{Z_1}}$ and $\widehat{\chi }_{\pi ,{Z}_2}$ be non-parametric estimators that adjust for $Z_1$ and $Z_2$ respectively.   Suppose we know that $Z_1 \preceq_{{L}} Z_2$.  If we want the length of the  95\% Wald confidence interval for  $\chi _{\pi }(P;\mathcal{G})$ to be bounded by $M$ then using $\widehat{\chi }_{\pi ,{Z_1}}$ 
we will need  $n_1 \approx \left\lbrace 3.92 \: \sigma _{\pi ,{Z}_1%
}\left( P\right) M^{-1} \right\rbrace^{2}$ samples,  whereas using $\widehat{\chi }_{\pi ,{Z_2}}$ we will need $n_2 \approx \left\lbrace 3.92 \: \sigma _{\pi ,{Z}_2%
}\left( P\right) M^{-1}\right\rbrace^{2}$ samples.  Since $\sigma _{\pi ,{Z}_1%
}^{2}\left( P\right) \leq  \sigma _{\pi ,{Z}_2
}^{2}\left( P\right)$ we have that $n_1 \leq n_2$ and hence 
$n_1\times c(Z_1) \leq n_2 \times c(Z_2)$.  In words, for the same level of precision,  the total cost of using $Z_1$ as an adjustment set will be lower than that of using  $Z_2$.

In the following section we show that there exists a minimum cost $L-N$ adjustment set,  which we denote $O_{c}$,  that satisfies that for any other minimum cost $L-N$ adjustment set $Z$ it holds that $O_{c} \preceq_{L} Z$.  We call  $O_{c}$ an optimal minimum cost $L-N$  adjustment set. To show this,  we will make a connection between minimum cost $L-N$ adjustment sets and min-cuts in a suitably constructed flow network.  This construction will also allow us to derive a polynomial time algorithm to compute $O_{c}$.

\section{Optimal minimum cost adjustment sets and network flows}
\label{sec:mincost_flow}

The following network flow construction is inspired by the construction in Theorem 6.4 of \cite{even}.  The main difference is that \cite{even} puts unit capacity on all `internal edges'.

\begin{definition}\label{def:network}
Let the flow network $\mathcal{D}$ be defined as follows.  For each vertex $W$ in $\mathcal{H}^{1}$ add two vertices $W^{\prime}$ and $W^{\prime\prime}$ and the edge $W^{\prime} \rightarrow W^{\prime\prime}$ to $\mathcal{D}$.  We call these internal edges.  If there is an edge joining $U$ and $W$ in $\mathcal{H}^{1}$ add edges $U^{\prime\prime} \rightarrow W^{\prime}$ and $W^{\prime\prime} \rightarrow U^{\prime}$.  We call these external edges.  Thus an edge $U - W$ in $\mathcal{H}^{1}$ gives place to the following structure in $\mathcal{D}$:
\begin{center}
\begin{tikzpicture}[>=stealth, node distance=2cm,
pre/.style={->,>=stealth,ultra thick,line width = 1.4pt}]
 \begin{scope}
    \tikzstyle{format} = [circle, inner sep=2pt,draw, thick, circle, line width=1.4pt, minimum size=3mm]
    
\node[format] (Up) {$U^{\prime}$};
\node[format, right of=Up] (Upp) {$U^{\prime\prime}$};
\node[format, right of= Upp] (Wp) {$W^{\prime}$};
\node[format, right of=Wp] (Wpp) {$W^{\prime\prime}$};

\draw (Up) edge[pre, black] node[above] {}(Upp)  ;
\draw (Upp) edge[pre, black] node[above] {}(Wp)  ;
\draw (Wp) edge[pre, black] node[above] {}(Wpp)  ;
\draw (Wpp) edge[pre, black,out=150, in =30] node[above] {}(Up)  ;

 \end{scope} 
 \end{tikzpicture} 
\end{center}
The capacity of an internal edge $e=W^{\prime}\to W^{\prime\prime}$ is equal to the cost of $W$, that is,  $k(e)=c(W)$,  except if $W$ is equal to $A$ or to $Y$,  in which case the capacity is infinity.  The capacity of external edges is infinity.  We set $Y^{\prime\prime}$ as the source and $A^{\prime}$ as the sink of the network.
\end{definition}

We will provide a full example of this construction shortly in Figure \ref{fig:first_flow}. 
We show in Lemma \ref{lemma:finite_cap} in the Appendix that there always exists a cut in $\mathcal{D}$ with finite capacity.
Next,  we define two mappings,  $\mathfrak{d}$ and $\mathfrak{h}$.  The former maps minimal separators in $\mathcal{H}^{1}$ to sets of vertices in $\mathcal{D}$, while the latter maps cuts with finite capacity in $\mathcal{D}$ to sets of vertices in $\mathcal{H}^{1}$.
\begin{definition}\label{def:equiv_cut_sep} 
Given $Z$ a minimal separator in $\mathcal{H}^{1}$ we let
$
\mathfrak{d}(Z)
$
be the set formed by $Y^{\prime\prime}$ and all vertices in $\mathcal{D}$ that lie on some directed path $\delta$ from $Y^{\prime\prime}$ to a vertex $W^{\prime}$ for some $W\in Z$,  where $\delta$  does not intersect any other $U^{\prime}$ or $U^{\prime\prime}$ for $U\in Z$. 
Given  $S$ a cut in $\mathcal{D}$ with finite capacity we let
$$
\mathfrak{h}(S)= \lbrace W: (W^{\prime},W^{\prime\prime}) \in (S, \overline{S}) \rbrace.
$$

\end{definition}

The following proposition establishes that $\mathfrak{d}$ maps  minimum cost separators in $\mathcal{H}^{1}$ to min-cuts in $\mathcal{D}$,  and $\mathfrak{h}$ maps min-cuts in $\mathcal{D}$ to 
minimum cost separators in $\mathcal{H}^{1}$.

\begin{proposition}\label{prop:equiv_cut_sep}
$ \:$

\begin{enumerate}
\item Let $Z$ be a minimal separator in $\mathcal{H}^{1}$.    Then $\mathfrak{d}(Z)$ is a cut in $\mathcal{D}$ with $k\lbrace \mathfrak{d}(Z)\rbrace=c(Z)$.

\item Let $S$ be a cut in $\mathcal{D}$ with finite capacity.  Then $
\mathfrak{h}(S)$
is a separator in $\mathcal{H}^{1}$ with $c\lbrace \mathfrak{h}(S)\rbrace=k(S)$.
\item Let $Z$ be a minimum cost separator in $\mathcal{H}^{1}$.    Then $\mathfrak{d}(Z)$ is a min-cut in $\mathcal{D}$.

\item Let $S$ be a min-cut in $\mathcal{D}$.  Then $
\mathfrak{h}(S)$
is a minimum cost separator in $\mathcal{H}^{1}$.

\item Let $Z$ be a minimum cost separator in $\mathcal{H}^{1}$.  Then $\mathfrak{h}\left\lbrace\mathfrak{d} \left( Z \right) \right\rbrace = Z$.

\end{enumerate}

\end{proposition}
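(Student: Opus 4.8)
The plan is to treat the two correspondence directions (Parts~1 and~2) as the structural core and then obtain Parts~3--5 as bookkeeping consequences of them together with the max-flow--min-cut theorem and Lemma~\ref{lemma:finite_cap}.

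I would start with Part~2, the robust direction. Given a finite-capacity cut $S$, first note that $A'\notin S$ (it is the sink) and $Y''\in S$ (the source), so $A\notin\mathfrak h(S)$ and $Y\notin\mathfrak h(S)$; indeed, if the internal edge of $A$ or of $Y$ crossed $(S,\overline S)$ its infinite capacity would contradict finiteness. Because every external edge has infinite capacity, finiteness of $S$ forces $(S,\overline S)$ to consist solely of internal edges $W'\to W''$ with $W'\in S$, $W''\notin S$, i.e. exactly the edges with $W\in\mathfrak h(S)$; summing their capacities gives $k(S)=\sum_{W\in\mathfrak h(S)}c(W)=c\{\mathfrak h(S)\}$. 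To see that $\mathfrak h(S)$ separates $A$ and $Y$, I would lift any hypothetical $A$--$Y$ path in $\mathcal H^1$ avoiding $\mathfrak h(S)$ to a directed path $Y''\to\cdots\to A'$ in $\mathcal D$, each intermediate vertex $V$ contributing its internal edge $V'\to V''$. Since $Y''\in S$ and $A'\notin S$, this directed path must cross $(S,\overline S)$; the crossing edge, being finite, is an internal edge $W'\to W''$, whence $W\in\mathfrak h(S)$ lies on the original path---a contradiction.

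The main work is Part~1. Here I would first replace the stated definition of $\mathfrak d(Z)$ by the equivalent reachability description
$$
\mathfrak d(Z)=\{\,P:P\text{ is reachable from }Y''\text{ by a directed path avoiding every internal edge }U'\to U''\text{ with }U\in Z\,\}.
$$
Establishing this equivalence is the delicate point and, I expect, the main obstacle: a vertex $V'$ may only be joinable to some $W'$ ($W\in Z$) through a route that revisits $Y''$, so the identification holds only when ``directed path'' in the definition is read as a walk (reachability); under a strict simple-path reading $\mathfrak d(Z)$ can omit such a $V'$ while the infinite-capacity edge $Y''\to V'$ still crosses the cut, inflating its capacity. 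Granting the reachability description, the argument is clean. For any edge $e=P\to Q$ other than a $Z$-internal edge, $P\in\mathfrak d(Z)$ immediately gives $Q\in\mathfrak d(Z)$ by extending the reaching path along $e$; hence no external edge and no non-$Z$ internal edge crosses $(S,\overline S)$, where $S=\mathfrak d(Z)$. Conversely, for $W\in Z$ the only edge into $W''$ is the forbidden edge $W'\to W''$, so $W''\notin\mathfrak d(Z)$, while minimality of $Z$ guarantees that each $W\in Z$ is adjacent to the $Y$-side component $R_Y$ of $\mathcal H^1\setminus Z$, so that $W'$ is reachable and $W'\in\mathfrak d(Z)$. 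Thus $(S,\overline S)$ is exactly $\{W'\to W'':W\in Z\}$, giving both that $S$ is a cut with $k(S)=c(Z)$ and the precise description of its crossing edges. I would also record that $A'\notin\mathfrak d(Z)$, since reachability of $A'$ would yield an $A$--$Y$ path in $\mathcal H^1$ avoiding $Z$, contradicting that $Z$ separates.

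Parts~3--5 then follow quickly. For Part~3, a minimum cost separator $Z$ is automatically minimal (positive costs forbid a cheaper proper subseparator), so Part~1 applies and $k\{\mathfrak d(Z)\}=c(Z)$; by Part~2 every finite cut $S$ satisfies $k(S)=c\{\mathfrak h(S)\}\ge c(Z)$, and since Lemma~\ref{lemma:finite_cap} furnishes a finite cut the min-cut is finite, whence $\mathfrak d(Z)$ attains the minimum capacity and is a min-cut. For Part~4, a min-cut $S$ is finite (again by Lemma~\ref{lemma:finite_cap}), so Part~2 gives a separator $\mathfrak h(S)$ with $c\{\mathfrak h(S)\}=k(S)$; comparing with $\mathfrak d(Z^\ast)$ for a minimum cost separator $Z^\ast$ yields $c\{\mathfrak h(S)\}=k(S)\le k\{\mathfrak d(Z^\ast)\}=c(Z^\ast)$, so $\mathfrak h(S)$ is minimum cost. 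Finally, Part~5 is a re-reading of Part~1: since $Z$ is minimum cost it is minimal, and the computation above shows $(\mathfrak d(Z),\overline{\mathfrak d(Z)})=\{W'\to W'':W\in Z\}$, so $\mathfrak h\{\mathfrak d(Z)\}=\{W:(W',W'')\in(\mathfrak d(Z),\overline{\mathfrak d(Z)})\}=Z$.
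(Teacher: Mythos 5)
Your proposal is correct, and its outer skeleton coincides with the paper's: Part 2 by lifting a hypothetical $A$--$Y$ path in $\mathcal{H}^{1}$ to a directed path from $Y^{\prime\prime}$ to $A^{\prime}$ that must cross the cut through a finite (hence internal) edge, and Parts 3--5 as cost/capacity bookkeeping from Parts 1--2 together with Lemma \ref{lemma:finite_cap} (your direct-comparison phrasing of Parts 3 and 4, versus the paper's arguments by contradiction, is an immaterial variation). The genuine difference lies in how you establish the crossing-edge identity $(\mathfrak{d}(Z), \overline{\mathfrak{d}(Z)}) = \lbrace (W^{\prime}, W^{\prime\prime}) : W \in Z \rbrace$, which the paper isolates as Lemma \ref{lemma:equal_crossings} and proves by a four-case analysis with explicit path-concatenation contradictions. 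You instead recast $\mathfrak{d}(Z)$ as the set of vertices reachable from $Y^{\prime\prime}$ without traversing $Z$-internal edges, after which the identity takes three lines: reachable sets are closed under permitted edges, $W^{\prime\prime}$ has the forbidden edge as its unique in-edge, and minimality of $Z$ (every $W \in Z$ has a neighbour in the $Y$-side component of $\mathcal{H}^{1}\setminus Z$) places each $W^{\prime}$ in the set; this also turns Part 5 into a one-line corollary, where the paper re-runs a separate path argument for the inclusion $\mathfrak{h}\lbrace \mathfrak{d}(Z)\rbrace \subset Z$. Moreover, your flag about the simple-path versus walk reading of Definition \ref{def:equiv_cut_sep} is a substantive observation, not pedantry: if $V$ is a pendant vertex adjacent only to $Y$ and $Z=\lbrace W\rbrace$ with $W$ adjacent to $Y$, then no \emph{simple} directed path from $Y^{\prime\prime}$ to $W^{\prime}$ passes through $V^{\prime}$ (it would have to revisit $Y^{\prime\prime}$), so under the strict reading $V^{\prime}\notin\mathfrak{d}(Z)$ while the infinite-capacity edge $Y^{\prime\prime}\to V^{\prime}$ crosses the cut, falsifying $k\lbrace \mathfrak{d}(Z)\rbrace=c(Z)$; the paper's own proof of Lemma \ref{lemma:equal_crossings} tacitly adopts the walk reading, since in its fourth case ``joining $\delta$ and $\kappa$'' need not yield a simple path. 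Your reachability reformulation is exactly the repair that makes this step rigorous. The one point you leave as granted --- that the walk-reading definition equals your reachability set --- deserves its one-line completion: any reachable vertex can be prolonged to some $W^{\prime}$ by walking through the copies of the $Y$-side component, using again that every $W\in Z$ is adjacent to that component; with that sentence added, your argument is complete and, on the central lemma, cleaner than the paper's.
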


Next,  we establish a connection between the $\subset$ relation defined over min-cuts in $\mathcal{D}$ and the $\unlhd_{\mathcal{H}^{1}}$ relation defined over separators in $\mathcal{H}^{1}$.

\begin{proposition}\label{prop:subset_implies_order}
Let $S$ and $S^{\prime}$ be min-cuts such that $S \subset S^{\prime}$.  Then $\mathfrak{h}(S) \unlhd_{\mathcal{H}^{1}} \mathfrak{h}(S^{\prime})$.
\end{proposition}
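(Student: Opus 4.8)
The plan is to exploit the vertex-splitting correspondence between $\mathcal{H}^{1}$ and $\mathcal{D}$ together with the fact that a cut of finite capacity can only sever internal edges. Write $Z_1 = \mathfrak{h}(S)$ and $Z_2 = \mathfrak{h}(S')$. Since external edges carry infinite capacity, for any finite cut $T$ the boundary $(T, \overline{T})$ consists solely of internal edges $W' \to W''$, and $\mathfrak{h}(T)$ records exactly those $W$ with $W' \in T$ and $W'' \notin T$; moreover min-cuts are finite by Lemma \ref{lemma:finite_cap}, so this applies to both $S$ and $S'$. First I would record how $S \subset S'$ constrains membership: $W' \in S$ implies $W' \in S'$, and $W'' \notin S'$ implies $W'' \notin S$. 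A short case analysis then yields the two facts that drive everything: (i) if $W \in Z_2 \setminus Z_1$ then $W' \notin S$, and (ii) if $W \in Z_1 \setminus Z_2$ then both $W' \in S'$ and $W'' \in S'$.

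Next I would set up the path-lifting dictionary. An undirected edge $U - W$ of $\mathcal{H}^{1}$ contributes both external edges $U'' \to W'$ and $W'' \to U'$ to $\mathcal{D}$, so an undirected path $V_0 - V_1 - \cdots - V_m$ lifts in two ways: a forward directed path $V_0'' \to V_1' \to V_1'' \to \cdots \to V_m'$, and a backward directed path $V_m' \to V_m'' \to V_{m-1}' \to \cdots \to V_0'$. The crucial observation is that, since external edges can never lie in the boundary of a finite cut, whenever such a lifted directed path starts inside a finite cut $T$ and ends outside $T$, the first edge by which it leaves $T$ must be an internal edge $V_j' \to V_j''$, which forces $V_j \in \mathfrak{h}(T)$.

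To prove $Y \perp_{\mathcal{H}^{1}} Z_2 \setminus Z_1 \mid Z_1$, I would take any path $Y = V_0 - \cdots - V_m = W$ with $W \in Z_2 \setminus Z_1$ and lift it forward to a directed path $V_0'' = Y'' \to V_1' \to \cdots \to V_m' = W'$. Here $Y''$ is the source, hence in $S$, while fact (i) gives $W' \notin S$; thus the path leaves $S$, and by the crucial observation it does so at an internal edge $V_j' \to V_j''$ with $V_j \in Z_1$. Since $Y, W \notin Z_1$, this $V_j$ is an interior vertex, so every such path meets $Z_1$. Symmetrically, for $A \perp_{\mathcal{H}^{1}} Z_1 \setminus Z_2 \mid Z_2$, I would take any path $A = V_0 - \cdots - V_m = W$ with $W \in Z_1 \setminus Z_2$ and lift it backward to $V_m' \to V_m'' \to \cdots \to V_0' = A'$. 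By fact (ii), $V_m' = W' \in S'$ and $V_m'' = W'' \in S'$, while $A' = V_0'$ is the sink and hence outside $S'$; so the path leaves $S'$ at an internal edge $V_j' \to V_j''$ with $V_j \in Z_2$. The condition $W'' \in S'$ guarantees the exit does not occur at the initial internal edge $V_m' \to V_m''$, so $V_j$ is a genuinely interior vertex, and every such path meets $Z_2$. Combining the two separations gives $\mathfrak{h}(S) \unlhd_{\mathcal{H}^{1}} \mathfrak{h}(S')$.

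The main obstacle I anticipate is the bookkeeping in the two separation steps: getting the orientation of the lifted path right in each case (which of the two external edges to use, and at which endpoint the path begins), and verifying in the backward case that the exit edge is a truly interior internal edge rather than the starting edge $W' \to W''$ or the terminal external edge into $A'$ — which is precisely where fact (ii), namely $W'' \in S'$, is needed. A secondary point to handle cleanly is that one may assume the path is simple (or pass to a shortest subpath), so that the vertex $V_j$ produced is interior and the separation is not vacuous; the only network input beyond the definitions is the finiteness of min-cuts.
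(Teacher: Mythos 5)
Your proof is correct, and for the part of the statement that the paper actually proves it uses the same core mechanism: lift an undirected path in $\mathcal{H}^{1}$ to a directed path in $\mathcal{D}$, then use finiteness of the min-cut capacity (via Lemma \ref{lemma:finite_cap}) together with the infinite capacity of external edges to force the lifted path to cross the cut boundary only at an internal edge $V_j^{\prime}\to V_j^{\prime\prime}$, which places $V_j$ in the image under $\mathfrak{h}$. For the condition $Y\perp_{\mathcal{H}^{1}} Z_2\setminus Z_1\mid Z_1$ the two arguments differ only in packaging: you precompute your fact (i) (that $W\in Z_2\setminus Z_1$ forces $W^{\prime}\notin S$) and then apply a clean ``first exit edge is internal'' observation, whereas the paper walks inductively along the lifted path $Y^{\prime\prime}\to U_1^{\prime}\to U_1^{\prime\prime}\to\cdots\to W^{\prime}\to W^{\prime\prime}$ and derives the same dichotomy at the endpoint, invoking $S\subset S^{\prime}$ only there to rule out $W^{\prime},W^{\prime\prime}\in S$. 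The genuine difference is coverage: the paper's appendix proof stops after establishing this first condition and never addresses the second half of the definition of $\unlhd_{\mathcal{H}^{1}}$, namely $A\perp_{\mathcal{H}^{1}} Z_1\setminus Z_2\mid Z_2$. Your backward lift $W^{\prime}\to W^{\prime\prime}\to V_{m-1}^{\prime}\to\cdots\to A^{\prime}$, driven by your fact (ii) (that $W\in Z_1\setminus Z_2$ forces $W^{\prime},W^{\prime\prime}\in S^{\prime}$, which is precisely what prevents the exit from occurring at the initial internal edge of $W$ itself, where it would be useless since $W\notin Z_2$), supplies exactly this missing half. It is worth noting that the two halves are not mere relabelings of one another: the $Y$-side argument exits the smaller cut $S$, the $A$-side argument exits the larger cut $S^{\prime}$, and they rest on the two distinct consequences of $S\subset S^{\prime}$. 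Your side remarks also check out: the blocking vertex produced is automatically distinct from the endpoints, since $A,Y\notin\mathfrak{h}(S)\cup\mathfrak{h}(S^{\prime})$ (a cut contains the source $Y^{\prime\prime}$ and excludes the sink $A^{\prime}$). So your proposal is sound and, as written, strictly more complete than the proof printed in the paper.
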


Lemmas \ref{lemma:characterize_mincot} and \ref{lemma_order_implies_var}  together with Propositions \ref{prop:equiv_cut_sep} and \ref{prop:subset_implies_order} imply that  if we are able to construct a min-cut $S_{c}$ that is a subset of any other min-cut,  then $\mathfrak{h}(S_{c})$ is an optimal minimum cost $L-N$ adjustment set.  We now show how such a min-cut can be constructed.

Given a flow $f$,  we will say that a path $\delta$ connecting $Y^{\prime\prime}$ and $W$ in $\mathcal{D}$ is augmenting for $f$ if for all edges $e$ in $\delta$ oriented from  $Y^{\prime\prime}$ to $W$ it holds that $f(e) < k(e)$ and for all edges $e$ in $\delta$ oriented from  $W$ to $Y^{\prime\prime}$ it holds that $f(e) >0$.
Suppose  that we have run a maximum flow algorithm on $\mathcal{D}$,  for example the preflow push algorithm \citep{preflow_push},  and  obtained a maximum flow $f^{\ast}$.  We are now ready to define our candidate optimal minimum cost $L-N$ adjustment set.

\begin{definition}
Let $S_{c}$ be the set formed by $Y^{\prime\prime}$ and all vertices $W$ in $\mathcal{D}$ such that there exists a path from $Y^{\prime\prime}$ to $W$ that is augmenting for $f^{\ast}$.  Let $O_{c} \equiv \mathfrak{h}(S_{c})$
\end{definition}
Note that  $S_{c}$ could in principle depend on the computed max-flow $f^{\ast}$,  even if this is not made explicit in the notation.

\begin{proposition}\label{prop:Sc_smallest}
$S_c$ is a min-cut.  For any other min-cut $S$ it holds that $S_{c} \subset S$.  
\end{proposition}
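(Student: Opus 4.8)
The plan is to mirror the classical proof of the max-flow min-cut theorem, identifying $S_c$ with the source side of the min-cut produced by the Ford--Fulkerson construction, and then to exploit the fact that every min-cut must saturate its forward edges and leave its backward edges empty. First I would verify that $S_c$ is a cut: it contains the source $Y^{\prime\prime}$ by definition, so it suffices to check that the sink $A^{\prime}\notin S_c$. If there were an augmenting path from $Y^{\prime\prime}$ to $A^{\prime}$, one could push a positive amount of flow along it and obtain a flow of strictly larger total value, contradicting the maximality of $f^{\ast}$; hence $A^{\prime}\notin S_c$. Next I would show that $S_c$ is in fact a min-cut by establishing the two standard saturation properties of the reachable set: every edge $e=P\to X$ in $(S_c,\overline{S_c})$ satisfies $f^{\ast}(e)=k(e)$, and every edge $e=X\to P$ with $P\in S_c$ and $X\notin S_c$ satisfies $f^{\ast}(e)=0$. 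Both follow by contradiction from the definition of $S_c$: if such a forward edge were unsaturated, or such a backward edge carried positive flow, the augmenting path reaching $P$ could be extended through $e$, forcing $X\in S_c$. Combining these with the cut-flow identity gives that the total value of $f^{\ast}$ equals $\sum_{e\in(S_c,\overline{S_c})}f^{\ast}(e)-\sum_{e\in(\overline{S_c},S_c)}f^{\ast}(e)=k(S_c)$. Since the value of any flow is a lower bound for the capacity of any cut, $k(S_c)$ attains this bound and $S_c$ is a min-cut.

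For the containment, let $S$ be any min-cut. By the max-flow min-cut theorem the value of $f^{\ast}$ equals $k(S)$, and the same cut-flow identity then forces every edge of $(S,\overline{S})$ to be saturated and every edge of $(\overline{S},S)$ to carry zero flow. Now take any $W\in S_c$ together with an augmenting path $\delta$ from $Y^{\prime\prime}$ to $W$, and I claim $W\in S$. Suppose not. Since $Y^{\prime\prime}\in S$ but $W\notin S$, walking along $\delta$ there is a first pair of consecutive vertices $P,X$ with $P\in S$ and $X\notin S$. If the edge between them is oriented $P\to X$, it lies in $(S,\overline{S})$ and is therefore saturated, contradicting the requirement $f^{\ast}(P\to X)<k(P\to X)$ that comes from $\delta$ being augmenting; if it is oriented $X\to P$, it lies in $(\overline{S},S)$ and therefore carries zero flow, contradicting the requirement $f^{\ast}(X\to P)>0$. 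Either way we reach a contradiction, so $W\in S$ and hence $S_c\subseteq S$. As $S\neq S_c$, the inclusion is proper, giving $S_c\subset S$.

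The only delicate points are bookkeeping ones. The main thing to get right is the case split on the orientation of the first crossing edge of $\delta$, matching it carefully against the augmenting conditions (strict slack on forward edges, strictly positive flow on backward edges) and against the saturation and emptiness properties forced on any min-cut. I would also record explicitly that the infinite-capacity external edges and the infinite-capacity internal edges at $A$ and $Y$ can never appear forward across a finite-capacity min-cut, which is consistent with Lemma \ref{lemma:finite_cap} guaranteeing the existence of a finite min-cut and with $\mathfrak{h}$ reading off only the saturated internal edges; this ensures that the cut-flow identity is applied only to finite quantities. Everything else is the standard residual-graph reasoning behind the max-flow min-cut theorem.
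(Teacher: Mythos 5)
Your proof is correct and takes essentially the same route as the paper's: both verify $A^{\prime}\notin S_c$ via maximality of $f^{\ast}$, establish that $S_c$ saturates its forward crossing edges and zeroes its backward ones by extending augmenting paths, and prove $S_c\subset S$ by walking an augmenting path and invoking the saturation/emptiness properties that any min-cut must satisfy (the paper's Lemma~\ref{lemma:misc_flow_results}, which you re-derive inline via the cut-flow identity and weak duality). The only cosmetic differences are that you inline this characterization rather than cite it as a separate lemma, and you argue by a first-crossing-edge contradiction where the paper runs an induction along the path's vertices.
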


The following theorem,  the main result of this paper,  establishes the optimality of $O_{c}$.  

\begin{theorem}\label{theo:main_theo}
$O_{c}$ is a minimum cost $L-N$ adjustment set.  For any other minimum cost $L-N$  adjustment set $Z$  it holds that
$
O_{c} \preceq_{L}  Z.
$
\end{theorem}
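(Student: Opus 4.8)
The plan is to assemble the theorem directly from the correspondence results already established, so that the argument reduces to a short chain of implications rather than any new combinatorial work. The key point is that $O_{c}=\mathfrak{h}(S_{c})$ and that $S_{c}$ is the smallest min-cut, so all of the optimality of $O_{c}$ will follow by transporting the containment $S_{c}\subset S$ from the network $\mathcal{D}$ back to the efficiency preorder on $\mathcal{H}^{1}$.

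First I would establish that $O_{c}$ is itself a minimum cost $L-N$ adjustment set. By Proposition \ref{prop:Sc_smallest}, $S_{c}$ is a min-cut in $\mathcal{D}$, so by part 4 of Proposition \ref{prop:equiv_cut_sep} the set $\mathfrak{h}(S_{c})=O_{c}$ is a minimum cost separator in $\mathcal{H}^{1}$; Lemma \ref{lemma:characterize_mincot} then identifies it as a minimum cost $L-N$ adjustment set.

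Second, given an arbitrary minimum cost $L-N$ adjustment set $Z$, I would argue as follows. By Lemma \ref{lemma:characterize_mincot}, $Z$ is a minimum cost $A-Y$ separator in $\mathcal{H}^{1}$. Because all vertex costs are strictly positive, any minimum cost separator is necessarily minimal — otherwise deleting a redundant vertex would produce a cheaper separator — so $\mathfrak{d}(Z)$ is well-defined, and by part 3 of Proposition \ref{prop:equiv_cut_sep} it is a min-cut in $\mathcal{D}$. Proposition \ref{prop:Sc_smallest} then yields $S_{c}\subset \mathfrak{d}(Z)$. Feeding this containment into Proposition \ref{prop:subset_implies_order} gives $\mathfrak{h}(S_{c})\unlhd_{\mathcal{H}^{1}}\mathfrak{h}\{\mathfrak{d}(Z)\}$, and using $\mathfrak{h}(S_{c})=O_{c}$ together with $\mathfrak{h}\{\mathfrak{d}(Z)\}=Z$ (part 5 of Proposition \ref{prop:equiv_cut_sep}) this reads $O_{c}\unlhd_{\mathcal{H}^{1}}Z$. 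Since both $O_{c}$ and $Z$ are minimum cost $L-N$ adjustment sets, Lemma \ref{lemma_order_implies_var} converts the graphical relation into $O_{c}\preceq_{L}Z$, which is the claim.

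The only point needing care — and the step I expect to be the main obstacle in making the write-up airtight — is the boundary case in which $\mathfrak{d}(Z)$ equals $S_{c}$ rather than strictly containing it, and, more generally, the precise reading of ``$\subset$'' in Propositions \ref{prop:subset_implies_order} and \ref{prop:Sc_smallest}. If $\subset$ is taken non-strictly the conclusion is immediate, since Proposition \ref{prop:subset_implies_order} applies verbatim with $S=S_{c}$ and $S^{\prime}=\mathfrak{d}(Z)$. If it is read strictly, then the equality case $S_{c}=\mathfrak{d}(Z)$ forces $O_{c}=\mathfrak{h}\{\mathfrak{d}(Z)\}=Z$ by part 5 of Proposition \ref{prop:equiv_cut_sep}, whence $O_{c}\preceq_{L}Z$ holds trivially by reflexivity, while the strict-containment case is handled exactly as above. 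I would state this dichotomy explicitly so that no gap is left at $S_{c}=\mathfrak{d}(Z)$.
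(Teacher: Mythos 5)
Your proposal is correct and follows essentially the same chain as the paper's own proof: Proposition \ref{prop:Sc_smallest} plus part 4 of Proposition \ref{prop:equiv_cut_sep} and Lemma \ref{lemma:characterize_mincot} for the first claim, then parts 3 and 5 of Proposition \ref{prop:equiv_cut_sep}, Proposition \ref{prop:subset_implies_order}, and Lemma \ref{lemma_order_implies_var} for the second (your observation that positive costs make every minimum cost separator minimal is exactly how the paper justifies applying $\mathfrak{d}$, stated inside the proof of part 3 of Proposition \ref{prop:equiv_cut_sep}). Your worry about the boundary case $S_{c}=\mathfrak{d}(Z)$ is moot because the paper uses $\subset$ non-strictly throughout (e.g., $E\subset V\times V$), though your dichotomy resolves it correctly in either reading.
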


Algorithm \ref{algo:main} summarizes the steps needed to compute $O_{c}$.  The complexity of Algorithm \ref{algo:main} will depend on the sub-routine used to compute the maximum flow in the third step.   For example, when the preflow push algorithm is used,  the overall complexity of Algorithm \ref{algo:main} is bounded by $\mathcal{O}\left( \#V^{2} \sqrt{\#E} \right)$.   The fourth step of Algorithm \ref{algo:main} can be easily implemented using a small modification of the depth first search algorithm.  We provide a Python implementation of Algorithm \ref{algo:main} in the \texttt{optimaladj} package,  available on pip.  Our algorithm computes maximum flows using the implementation of the preflow push algorithm available in the \texttt{networkx} library \citep{networkx}.

\begin{algorithm}[h!]
	\SetKwInOut{Input}{input}\SetKwInOut{Output}{output}
    \SetAlgoLined\DontPrintSemicolon
    \SetKwProg{proc}{procedure}{}{}
	\proc{}
	{           
	    construct $\mathcal{H}^{1}$\\
	    construct $\mathcal{D}$\\
	    compute a maximum flow $f^{\ast}$ on $\mathcal{D}$\\
		compute $S_c$ the set of nodes reachable from $Y^{\prime\prime}$ via paths that are augmenting for $f^{\ast}$\\
		compute $\mathfrak{h}(S_c)$\\
	 \Return{$\mathfrak{h}(S_c)$}
	} 
		\caption{{\bf Pseudo-algorithm to compute $O_{c}$} }
			\label{algo:main}
\end{algorithm}

\FloatBarrier
When all variables have unit costs,  Algorithm \ref{algo:main} computes an $L-N$ adjustment set that is optimal among those of minimum cardinality.  Algorithm 1 of \cite{us_bka}  does the same thing,  but with  $\mathcal{O}\left( \#V^{3.5}  \right)$  complexity.  Thus,  Algorithm \ref{algo:main} also provides an improvement on Algorithm 1 of \cite{us_bka}  for the task of computing an optimal minimum cardinality $L-N$ adjustment set.

\subsection{Examples}\label{sec:examples}
In the following figures we illustrate the results of this section. Dashed circles designate hidden variables and rectangles
the variables that the treatment rule depends on.  The numbers below the name of each vertex in $\mathcal{G}$ and $\mathcal{H}^{1}$ represent the cost of the variable associated with the vertex.  We do not assign  costs to $A,  Y$ and variables in $\ignore$,  since their costs are not relevant for the comparison of minimum-cost $L-N$ adjustment sets.  
Figure \ref{fig:first_flow} shows the flow network $\mathcal{D}$ associated with the graphs in Figure \ref{fig:first}.  Edges with finite capacities are colored green,  with the numbers next to the edges representing capacities.  All black edges have infinite capacity.

\begin{figure}[ht!]
\centering
\subfloat[$\mathcal{G}$]{
\begin{tikzpicture}[>=stealth, node distance=1.3cm,
pre/.style={->,>=stealth,ultra thick,line width = 1.4pt}]
 \begin{scope}
    \tikzstyle{format} = [circle, inner sep=2pt,draw, thick, circle, line width=1.4pt, minimum size=3mm]
    \node[format, rectangle] (X) {$X \atop 1$};
    \node[format, below of=X] (A) {$A$};
        \node[format, below of=A] (K) {$K\atop 4$};
        \node[format, below of=K] (M) {$M$};
        \node[left of=A] (l) {};
         \node[right of=A] (r) {};
            \node[format, below of=l] (P) {$B \atop 2$};
            \node[format, below of=r] (Q) {$Q \atop 1$};
\node[format, below of=P] (R) {$R \atop 1$};
\node[format, below of=Q] (T) {$T \atop 1$};
\node[right of=R] (m) {};
\node[format, below of=m] (Y) {$Y$};
\node[format, dashed,  right of=Y] (U) {$U$};        
\node[format, right of=U] (F) {$F \atop 1$};        

\draw (X) edge[pre, black] (A);
\draw (K) edge[pre, black] (A);
\draw (P) edge[pre, black] (K);
\draw (Q) edge[pre, black] (K);
\draw (A) edge[pre, black,  out=330, in=50] (M);
\draw (P) edge[pre, black] (R);
\draw (Q) edge[pre, black] (T);
\draw (R) edge[pre, black] (Y);
\draw (T) edge[pre, black] (Y);
\draw (M) edge[pre, black] (Y);
\draw (U) edge[pre, black] (Y);
\draw (U) edge[pre, black] (F);
 \end{scope} 
 \end{tikzpicture}
 }  \qquad 
\subfloat[$\mathcal{H}^{1}$]{
\begin{tikzpicture}[>=stealth, node distance=1.3cm,
pre/.style={>=stealth,ultra thick,line width = 1.4pt}]
 \begin{scope}
    \tikzstyle{format} = [circle, inner sep=2pt,draw, thick, circle, line width=1.4pt, minimum size=3mm]
 \node[format, rectangle] (X) {$X \atop 1$};
 \node[format, below of=X] (A) {$A$};
        \node[left of=A] (l) {};
         \node[right of=A] (r) {};
                  \node[format, below of=A] (K) {$K \atop 4$};
            \node[format, below of=l] (P) {$B \atop 2$};
            \node[format, below of=r] (Q) {$Q \atop 1$};
\node[format, below of=P] (R) {$R \atop 1$};
\node[format, below of=Q] (T) {$T \atop 1$};
\node[right of=R] (m) {};
\node[format, below of=m] (Y) {$Y$};

\draw (P) edge[pre, black] (R);
\draw (P) edge[pre, black, out=45, in=135] (Q);
\draw (P) edge[pre, black] (K);
\draw (Q) edge[pre, black] (K);
\draw (K) edge[pre, black] (A);
\draw (K) edge[pre, black, out=135, in=225] (X);
\draw (Q) edge[pre, black] (T);
\draw (R) edge[pre, black] (Y);
\draw (R) edge[pre, black] (T);
\draw (T) edge[pre, black] (Y);
\draw (X) edge[pre, black] (A);
\draw (X) edge[pre, black, out=330,  in=90] (Y);
 \end{scope} 
 \end{tikzpicture}
 }
\caption{A directed acyclic graph $\mathcal{G}$ and  the undirected graph $\mathcal{H}^{1}$ associated with it.
}
\label{fig:first}
\end{figure}
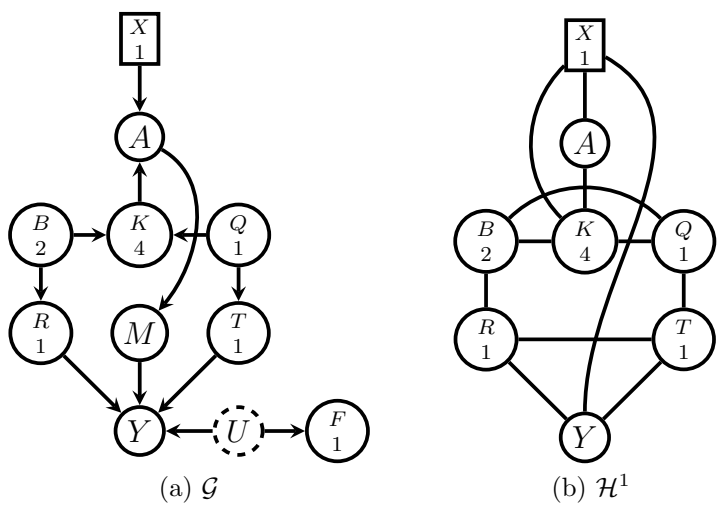

For the directed acyclic graph $\mathcal{G}$ in Figure \ref{fig:first},  let ${L}=\lbrace X \rbrace $ and ${N}=V \setminus \lbrace  U \rbrace$. Then $\an_{\mathcal{G}}(\lbrace A,Y\rbrace \cup {L})={V} \setminus \lbrace F \rbrace $, $\forb(A,Y,\mathcal{G})=\lbrace A,Y,  M\rbrace$ and $\ignore=\lbrace U, M \rbrace $.  
In $\mathcal{H}^1$,  the set of all separators is given by the collection of sets $Z$ that satisfy $X \in  Z$ and at least one of the following:
\begin{itemize}
\item $K\in Z$.
\item $B \in Z$ or $R \in Z$,  and  $Q \in Z$ or $T \in Z$.
\end{itemize}
The only minimum cost separators are $Z_1 = \lbrace X,  Q, R\rbrace$ and $Z_2 = \lbrace X,  T, R\rbrace$.  It is easy to show that $Z_2 \unlhd_{\mathcal{H}^{1}} Z_1$.  Thus,  $Z_2$ is an optimal minimum cost $L-N$ adjustment set.  
Using results from \cite{us_bka},  it is easy to show that there exists a globally optimal $L-N$ adjustment set in $\mathcal{G}$,  i.e.   an $L-N$ adjustment set that is more efficient than any other $L-N$ adjustment set,  and that it is given by $Z_2 \cup \lbrace F \rbrace$.
Turn now to the representation of $\mathcal{D}$ in Figure \ref{fig:first_flow}.  The min-cut obtained by running the preflow push algorithm and then computing $S_c$ is given by $S_c =\lbrace Y^{\prime\prime},  X^{\prime}, T^{\prime}, R^{\prime}\rbrace$.  The capacity of this cut is $k(S_c)=k\lbrace (R^{\prime},R^{\prime\prime})\rbrace+k\lbrace (T^{\prime},T^{\prime\prime})\rbrace+ k\lbrace(X^{\prime},X^{\prime\prime})\rbrace=3$.
The optimal minimum cost  $L-N$ adjustment set is then $O_{c}=\mathfrak{h}(S_c)=\lbrace X, T,R\rbrace$,  matching what we obtained earlier by analyzing separators in $\mathcal{H}^{1}$.  

\begin{figure}[ht!]
\centering
\begin{tikzpicture}[>=stealth, node distance=1.3cm,
pre/.style={->,>=stealth,ultra thick,line width = 1.4pt},  scale=0.7, every node/.style={transform shape}]
 \begin{scope}
    \tikzstyle{format} = [circle, inner sep=2pt,draw, thick, circle, line width=1.4pt, minimum size=3mm]
    
\node[format] (Xp) {$X^{\prime}$};

             \node[format,  below of=Xp] (Xpp) {$X^{\prime\prime}$};    
    \node[format,  below of=Xpp] (Ap) {$A^{\prime}$};
             \node[format,  below of=Ap] (App) {$A^{\prime\prime}$};
             \node[format,  below of=App] (Kp) {$K^{\prime}$};
             \node[format,  below of=Kp] (Kpp) {$K^{\prime\prime}$};
        \node[left of=Kpp] (l) {};
         \node[right of=Kpp] (r) {};
            \node[format, below of=l] (Pp) {$B^{\prime}$};
            \node[format, below of=r] (Qp) {$Q^{\prime}$};
\node[format, below of=Pp] (Ppp) {$B^{\prime\prime}$};
\node[format, below of=Qp] (Qpp) {$Q^{\prime\prime}$};
\node[format, below of=Ppp] (Rp) {$R^{\prime}$};
\node[format, below of=Qpp] (Tp) {$T^{\prime}$};
\node[format, below of=Rp] (Rpp) {$R^{\prime\prime}$};
\node[format, below of=Tp] (Tpp) {$T^{\prime\prime}$};

\node[right of=Rpp] (m) {};
\node[format, below of=m] (Yp) {$Y^{\prime}$};
        \node[format, below of=Yp] (Ypp) {$Y^{\prime\prime}$};

\draw (Xp) edge[pre, black!30!green] node[right] {$1$}(Xpp)  ;

\draw (Xpp) edge[pre, black] node[right] {}(Ap)  ;
\draw (Xpp) edge[pre, black,out=225, in=135] node[right] {}(Kp)  ;
\draw (Xpp) edge[pre, black,out=315, in=0] node[right] {}(Yp)  ;

\draw (Ap) edge[pre, black] node[right] {}(App)  ;
\draw (App) edge[pre, black,out=135, in=215] node[left] {}(Xp)  ;
\draw (App) edge[pre, black] node[right] {}(Kp)  ;

\draw (Kp) edge[pre, black!30!green] node[left] {$4$ }(Kpp)  ;
\draw (Kpp) edge[pre, black] node[left] {}(Pp)  ;
\draw (Kpp) edge[pre, black] node[right] {}(Qp)  ;
\draw (Kpp) edge[pre, black, out=45, in=0] node[left] {}(Ap)  ;
\draw (Kpp) edge[pre, black, out=135, in=180] node[left] {}(Xp)  ;

\draw (Pp) edge[pre, black!30!green] node[right] {$2$ }(Ppp)  ;
\draw (Ppp) edge[pre, black] node[above] {}(Qp)  ;
\draw (Ppp) edge[pre, black]  node[left] {}(Rp)  ;
\draw (Ppp) edge[pre, black,out=130, in=200]  node[left] {}(Kp)  ;

\draw (Qp) edge[pre, black!30!green] node[left] {$1$ }(Qpp)  ;
\draw (Qpp) edge[pre, black] node[below] {}(Pp)  ;
\draw (Qpp) edge[pre, black]  node[right] {}(Tp)  ;
\draw (Qpp) edge[pre, black,out=50, in=340]  node[right] {}(Kp)  ;

\draw (Rp) edge[pre, black!30!green]  node[right] {$1$ }(Rpp)  ;
\draw (Rpp) edge[pre, black] node[above] {}(Tp)  ;
\draw (Rpp) edge[pre, black] node[left] {}(Yp)  ;
\draw (Rpp) edge[pre, black,out=140, in=220] node[above] {}(Pp)  ;

\draw (Tp) edge[pre, black!30!green]  node[left] {$1$ }(Tpp)  ;
\draw (Tpp) edge[pre, black] node[right] {}(Yp)  ;
\draw (Tpp) edge[pre, black] node[below] {}(Rp)  ;
\draw (Tpp) edge[pre, black,out=40, in=320] node[above] {}(Qp)  ;

\draw (Yp) edge[pre, black] node[right] {}(Ypp)  ;
\draw (Ypp) edge[pre, black, out=140, in=215] node[left] {}(Rp)  ;
\draw (Ypp) edge[pre, black, out=40, in=325] node[right] {}(Tp)  ;
\draw (Ypp) edge[pre, black, out=135, in=180] node[right] {}(Xp)  ;

 \end{scope} 
 \end{tikzpicture} 
\caption{The flow network $\mathcal{D}$ corresponding to the directed acyclic graph in Figure \ref{fig:first}.  All black edges have infinite capacity.
}
\label{fig:first_flow}
\end{figure}
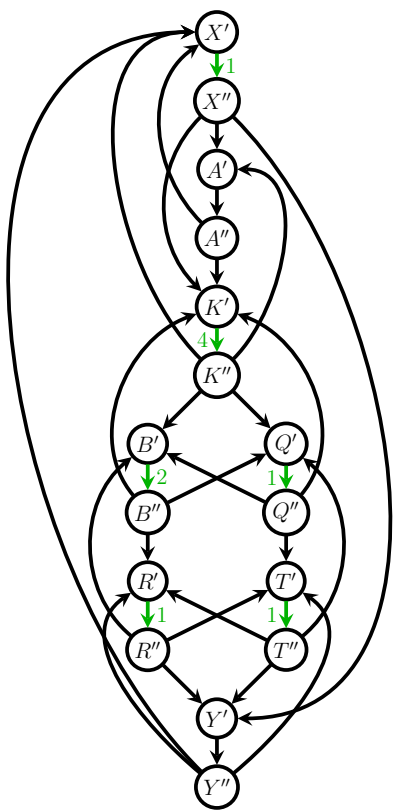

It follows from Theorem 1 of \cite{us_bka} that $O_c$ coincides with the optimal $L-N$ adjustment set among minimal $L-N$ adjustment sets.  However, it is not always the case that the optimal minimum cost and the optimal minimal $L-N$ adjustment sets are equal.  For this same graph,  if the cost $B$ were 1 and the cost of $R$ were 2 then the optimal minimal $L-N$ adjustment set would still be equal to $\lbrace X,  R, T \rbrace$,  whereas the optimal minimum cost  $L-N$ adjustment set would be $\lbrace X, B,T \rbrace$.  Since $\lbrace X,  R, T \rbrace \unlhd_{\mathcal{H}^{1}} \lbrace X, B,T \rbrace$,  this is an example in which the optimal minimal $L-N$ adjustment set is more efficient that the optimal minimum cost $L-N$ adjustment set.  The converse can never happen,  because all minimum cost $L-N$ adjustment sets are minimal $L-N$ adjustment sets.

Going back to our original example,  note that $Z_3=\lbrace X,  K\rbrace$ is the $L-N$ adjustment set with minimum possible cardinality.  It is easy to check that $Z_2 \unlhd_{\mathcal{H}^{1}} Z_3$ and thus in this case,  the optimal minimum cost $L-N$ adjustment set is more efficient than the optimal minimum cardinality $L-N$ adjustment set.  The graph in Figure \ref{fig:Omin_better} provides one example in which the reverse situation holds.

Indeed,  for the graph $\mathcal{G}$ in Figure \ref{fig:Omin_better},  let ${L}=\emptyset $ and ${N}=V $. Then $\an_{\mathcal{G}}(\lbrace A,Y\rbrace \cup {L})={V}$, $\forb(A,Y,\mathcal{G})=\lbrace A,Y\rbrace$ and $\ignore=\emptyset$.  It is easy to check that there is only one minimum cost separator in $\mathcal{H}^{1}$,  given by $Z_1=\lbrace B, Q\rbrace$.  However,   $Z_2=\lbrace T, R\rbrace$ is a minimum cardinality separator that satisfies $Z_2 \unlhd_{\mathcal{H}^{}1} Z_1$.  Thus,  in this case,  the optimal minimum cardinality $L-N$ adjustment set is more efficient that the optimal minimum cost $L-N$ adjustment set.

This example also illustrates the point made in the introduction that in general there does not exist an optimal $L-N$ adjustment set among those that satisfy an upper bound on their cost.  For the graph in Figure \ref{fig:Omin_better},  if the available budget is equal to 3,  the investigator has to choose between $\lbrace B,  Q \rbrace $,  $\lbrace B,  R \rbrace$ and $\lbrace T, Q \rbrace$.  Clearly $\lbrace B,  R \rbrace  \unlhd_{\mathcal{H}^{1}}  \lbrace B,  Q \rbrace $ and $\lbrace T,  Q \rbrace  \unlhd_{\mathcal{H}^{1}}  \lbrace B,  Q \rbrace $ and so by Propositions 3 and 5 of \cite{us_bka},  $\lbrace B,  R \rbrace  \preceq_{L}  \lbrace B,  Q \rbrace $ and $\lbrace T,  Q \rbrace  \preceq_{L}  \lbrace B,  Q \rbrace $. Thus,  the investigator actually needs to choose between  $\lbrace B,  R \rbrace$ and $\lbrace T, Q \rbrace$.  However,  in their Example 2,  \cite{eff-adj} show it is not possible to compare the asymptotic variances of these two adjustments based solely on the causal graph,  because there exist probability laws in the Bayesian Network $\mathcal{M(G)}$ under which $\lbrace B,  R \rbrace$ is more efficient but also probability laws in the Bayesian Network $\mathcal{M(G)}$ under which $\lbrace Q,  T \rbrace$ is more efficient.

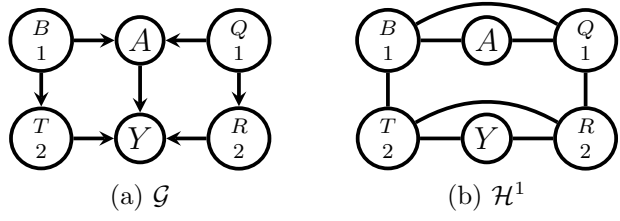
\begin{figure}[ht!]
\centering
\subfloat[$\mathcal{G}$]{
\begin{tikzpicture}[>=stealth, node distance=1.3cm,
pre/.style={->,>=stealth,ultra thick,line width = 1.4pt}]
 \begin{scope}
    \tikzstyle{format} = [circle, inner sep=2pt,draw, thick, circle, line width=1.4pt, minimum size=3mm]
            \node[format] (A) {$A$};
        \node[format, left of=A] (B) {$B \atop 1$};
\node[format, right of=A] (Q) {$Q \atop 1$};
\node[format, below of=A] (Y) {$Y$};
\node[format, below of=B] (T) {$T \atop 2$};
\node[format, below of=Q] (R) {$R \atop 2$};                 

                 \draw (B) edge[pre, black] (A);
\draw (Q) edge[pre, black] (A);

                 \draw (B) edge[pre, black] (T);
                 \draw (Q) edge[pre, black] (R);                 
                 \draw (T) edge[pre, black] (Y);                                  
 \draw (R) edge[pre, black] (Y);              
  \draw (A) edge[pre, black] (Y);                                  
                    
 \end{scope} 
 \end{tikzpicture}

}\qquad 
\subfloat[$\mathcal{H}^{1}$]{
\begin{tikzpicture}[>=stealth, node distance=1.3cm,
pre/.style={>=stealth,ultra thick,line width = 1.4pt}]
 \begin{scope}
    \tikzstyle{format} = [circle, inner sep=2pt,draw, thick, circle, line width=1.4pt, minimum size=3mm]

 \node[format] (A) {$A$};
        \node[format, left of=A] (B) {$B \atop 1$};
\node[format, right of=A] (Q) {$Q \atop 1$};
\node[format, below of=A] (Y) {$Y$};
\node[format, below of=B] (T) {$T \atop 2$};
\node[format, below of=Q] (R) {$R \atop 2$};                 

                 \draw (B) edge[pre, black] (A);
\draw (Q) edge[pre, black] (A);
 \draw (B) edge[pre, black, out=30, in=150] (Q);                                  

                 \draw (B) edge[pre, black] (T);
                 \draw (Q) edge[pre, black] (R);                 
                 \draw (T) edge[pre, black] (Y);       
 \draw (T) edge[pre, black, out=30, in=150] (R);                                  
 \draw (R) edge[pre, black] (Y);

 \end{scope} 
 \end{tikzpicture}
 }
\caption{An example of a directed acyclic graph in which the optimal minimum cardinality $L-N$ adjustment set is more efficient than the optimal minimum cost $L-N$ adjustment set.
}
\label{fig:Omin_better}
\end{figure}

\FloatBarrier

\newpage

\section{Appendix}
This section contains the proofs of all the results in the main paper,  as well as preliminary technical lemmas.

We will need the following lemmas in the proofs of Propositions \ref{prop:equiv_cut_sep} and \ref{prop:subset_implies_order}.
\begin{lemma}\label{lemma:equal_crossings}
 Let $Z$ be a minimal separator in $\mathcal{H}^{1}$.  Then \begin{equation}
(\mathfrak{d}(Z),  \overline{\mathfrak{d}(Z)}) =\lbrace (W^{\prime},  W^{\prime\prime}):  W\in Z\rbrace 
\nonumber
\end{equation}
\end{lemma}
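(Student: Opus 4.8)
The plan is to reduce the lemma to a clean reachability statement in an auxiliary network. Let $\mathcal{D}^{\prime}$ denote the network obtained from $\mathcal{D}$ by deleting the internal edges $W^{\prime}\to W^{\prime\prime}$ for every $W\in Z$, and let $R$ be the set of vertices reachable from the source $Y^{\prime\prime}$ in $\mathcal{D}^{\prime}$. The heart of the argument is the identity $\mathfrak{d}(Z)=R$. Granting this, the lemma is almost immediate: since $R$ is a reachable set in $\mathcal{D}^{\prime}$, no edge of $\mathcal{D}^{\prime}$ leaves $R$, so every edge of $\mathcal{D}$ that leaves $\mathfrak{d}(Z)=R$ must be one of the deleted internal edges $W^{\prime}\to W^{\prime\prime}$ with $W\in Z$. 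Such an edge actually leaves $R$ iff $W^{\prime}\in R$ and $W^{\prime\prime}\notin R$. The vertex $W^{\prime\prime}$ is never in $R$, because its only incoming edge in $\mathcal{D}$ is $W^{\prime}\to W^{\prime\prime}$, which has been deleted (and $W^{\prime\prime}\neq Y^{\prime\prime}$ since $Y\notin Z$). And $W^{\prime}\in R$ follows from minimality of $Z$: every vertex of a minimal $A$-$Y$ separator is adjacent to the connected component $C_{Y}$ of $Y$ in $\mathcal{H}^{1}\setminus Z$, so choosing a neighbour $B\in C_{Y}$ of $W$ and lifting a path $Y\rightsquigarrow B$ inside $C_{Y}$ yields a directed path $Y^{\prime\prime}\rightsquigarrow B^{\prime\prime}\to W^{\prime}$ in $\mathcal{D}^{\prime}$. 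Hence the edges leaving $\mathfrak{d}(Z)$ are exactly $\lbrace (W^{\prime},W^{\prime\prime}):W\in Z\rbrace$, which is the assertion.

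First I would establish $\mathfrak{d}(Z)\subseteq R$. A vertex in $\mathfrak{d}(Z)$ lies on a directed path $\delta$ from $Y^{\prime\prime}$ to some separator prime $W^{\prime}$ that meets no $U^{\prime},U^{\prime\prime}$ with $U\in Z$ other than its endpoint $W^{\prime}$. Such a $\delta$ cannot traverse any deleted edge $V^{\prime}\to V^{\prime\prime}$ with $V\in Z$, for that would force $\delta$ through the separator double $V^{\prime\prime}$, which is forbidden; thus $\delta$ lives in $\mathcal{D}^{\prime}$ and all its vertices belong to $R$.

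The reverse inclusion $R\subseteq\mathfrak{d}(Z)$ is the main obstacle, and is where minimality is used essentially. I would first record three structural facts about $\mathcal{D}^{\prime}$: each separator double $W^{\prime\prime}$ is unreachable, as above; each separator prime $W^{\prime}$ is a sink, since its only out-edge $W^{\prime}\to W^{\prime\prime}$ was deleted; and consequently no directed path in $\mathcal{D}^{\prime}$ can cross from the $Y$-side to the $A$-side, whence $R\subseteq\lbrace V^{\prime},V^{\prime\prime}:V\in C_{Y}\rbrace\cup\lbrace W^{\prime}:W\in Z\rbrace$. Given $v\in R$, I would then extend a path $Y^{\prime\prime}\rightsquigarrow v$ to a separator prime while remaining in $\mathcal{D}^{\prime}$: every non-separator vertex of $\mathcal{D}^{\prime}$ has an out-edge, and within the connected component $C_{Y}$ one can walk to a boundary vertex $B$ adjacent to some $W\in Z$ and take $B^{\prime\prime}\to W^{\prime}$. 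Because separator primes are sinks, the extended path reaches a separator prime only at its very end and meets no other separator vertex, certifying $v\in\mathfrak{d}(Z)$.

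The one delicate point I would flag is the interplay between ``directed path'' and mere reachability. A vertex such as $Y^{\prime}$ can be reachable from $Y^{\prime\prime}$ in $\mathcal{D}^{\prime}$ (entered through an external edge $U^{\prime\prime}\to Y^{\prime}$) and yet lie on no \emph{simple} path issuing from $Y^{\prime\prime}$ and ending at a separator prime, since continuing past $Y^{\prime}$ repeats the source. I would therefore read the defining condition for $\mathfrak{d}(Z)$ as reachability, equivalently allowing the certifying directed paths to be walks. This reading is consistent with --- indeed forced by --- the capacity computation $k\lbrace\mathfrak{d}(Z)\rbrace=c(Z)$ of Proposition~\ref{prop:equiv_cut_sep}: under the strictly-simple-path reading $\mathfrak{d}(Z)$ would omit vertices like $Y^{\prime}$ and thereby acquire spurious infinite-capacity external crossing edges, contradicting finiteness of $k\lbrace\mathfrak{d}(Z)\rbrace$. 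Apart from this bookkeeping, the argument reduces to routine case checks on the three edge types of $\mathcal{D}$ --- separator-internal, non-separator-internal, and external --- of which only the first can leave $R$.
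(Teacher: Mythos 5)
Your proof is correct, but it takes a genuinely different route from the paper's. The paper proves the two inclusions of the displayed identity directly: minimality supplies, for each $W\in Z$, a $W$--$Y$ path in $\mathcal{H}^{1}$ avoiding $Z\setminus\lbrace W\rbrace$ (this is exactly your boundary fact about $C_{Y}$, in path form), and the reverse inclusion is a four-way case analysis on a crossing edge $(B,Q)$, whose hard case is an external edge $W^{\prime\prime}\to U^{\prime}$ with $W,U\notin Z$, killed by splicing the witnessing path for $W^{\prime\prime}$ with the initial segment of a $U$--$A$ path up to its first $Z$-vertex --- a step that invokes the separator property of $Z$, not just minimality. You instead delete the internal edges of $Z$ to form $\mathcal{D}^{\prime}$, identify $\mathfrak{d}(Z)$ with the reachability set $R$ of $Y^{\prime\prime}$, and then read the crossing edges off for free, since reachable sets are closed under out-edges of $\mathcal{D}^{\prime}$; the paper's hard case is absorbed into your structural containment $R\subseteq\lbrace V^{\prime},V^{\prime\prime}:V\in C_{Y}\rbrace\cup\lbrace W^{\prime}:W\in Z\rbrace$, which follows because in $\mathcal{D}^{\prime}$ each separator prime becomes a sink and each separator double loses its unique in-edge and is unreachable. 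Beyond modularity, your approach surfaces a real imprecision in Definition \ref{def:equiv_cut_sep}: as you note, under a strictly-simple-path reading a vertex such as $Y^{\prime}$, entered via an external edge $U^{\prime\prime}\to Y^{\prime}$, lies on no simple witnessing path, yet it must belong to $\mathfrak{d}(Z)$ for both the lemma and $k\lbrace\mathfrak{d}(Z)\rbrace=c(Z)$ to hold; your walk/reachability reading is the correct one, and it is in fact the reading the paper itself tacitly uses, since its own case-four concatenation of $\delta$ and $\kappa$ can repeat vertices (for instance when $U^{\prime}=Y^{\prime}$). Two small caveats: your claim that every non-separator vertex of $\mathcal{D}^{\prime}$ has an out-edge fails for $V^{\prime\prime}$ when $V$ is isolated in $\mathcal{H}^{1}$, though this is harmless because your extension walk stays inside the connected set $C_{Y}$; and your argument, like the paper's, implicitly assumes $A$ and $Y$ are connected in $\mathcal{H}^{1}$, so that $Z\neq\emptyset$ and $C_{Y}$ meets the boundary of $Z$ --- in the degenerate disconnected case with $Z=\emptyset$ the stated identity itself fails whenever $Y$ has neighbours, so this is a standing assumption shared with the paper rather than a defect of your proof.
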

\begin{proof}

We first show that $(\mathfrak{d}(Z),  \overline{\mathfrak{d}(Z)}) \supset \lbrace (W^{\prime},  W^{\prime\prime}):  W\in Z\rbrace$.
Since $Z$ is a minimal separator,  for any vertex $W\in Z$ there is a path connecting $W$ and $Y$ in $\mathcal{H}^{1}$ that does not intersect other vertices in $Z$,  and such a path corresponds to a directed path in $\mathcal{D}$ from $Y^{\prime\prime}$ to $W^{\prime}$ that does not intersect any other vertices $U^{\prime}$ or $U^{\prime\prime}$ for $U\in Z$. 
Hence,  if $W\in Z$ then $W^{\prime}\in\mathfrak{d}(Z)$ and $W^{\prime\prime}\notin \mathfrak{d}(Z)$ .
Thus $(\mathfrak{d}(Z),  \overline{\mathfrak{d}(Z)}) \supset \lbrace (W^{\prime},  W^{\prime\prime}):  W\in Z\rbrace $.

Next we prove that $(\mathfrak{d}(Z),  \overline{\mathfrak{d}(Z)}) \subset \lbrace (W^{\prime},  W^{\prime\prime}):  W\in Z\rbrace$. Take $(B,Q) \in (\mathfrak{d}(Z),  \overline{\mathfrak{d}(Z)})$,  we will show that $(B,Q) \in  \lbrace (W^{\prime},  W^{\prime\prime}):  W\in Z\rbrace$.  We have the following four cases to analyze.
\begin{itemize}
\item  Assume that $B=W^{\prime}$ for some $W\in Z$.  Due to how $\mathcal{D}$ was constructed,  necessarily $Q=W^{\prime\prime}$.  Thus $(B,Q) \in  \lbrace (W^{\prime},  W^{\prime\prime}):  W\in Z\rbrace$.
\item  Assume that $B=W^{\prime}$ for some $W\notin Z$.  Due to how $\mathcal{D}$ was constructed,  necessarily $Q=W^{\prime\prime}$.  Now,  since $W^{\prime}\in \mathfrak{d}(Z)$,  there exists a directed path $\delta$ in $\mathcal{D}$ from $Y^{\prime\prime}$ to $U^{\prime}$ for some $U\in Z$,  such that $\delta$ does not intersect $X^{\prime}$ or $X^{\prime\prime}$ for $X\in Z\setminus \lbrace U \rbrace$ and such that $W^{\prime}$ lies on $\delta$.   But $\delta$ has to go through  $W^{\prime\prime}$ to reach $U^{\prime}$ and this implies that $W^{\prime\prime} \in \mathfrak{d}(Z)$,  which contradicts the assumption that $W^{\prime\prime}=Q \in \overline{\mathfrak{d}(Z)}$.
\item  Next consider the case that $B=W^{\prime\prime}$ for some $W\in Z$.   This cannot happen,  since as we argued before,  if $W\in Z$ then $W^{\prime}\in \mathfrak{d}(Z)$ and $W^{\prime\prime}\notin \mathfrak{d}(Z)$.
\item Finally,  consider the case $B=W^{\prime\prime}$ for some $W \notin Z$.   Then, due to how $\mathcal{D}$ was constructed,  $Q=U^{\prime}$ for some $U$.  Since $Q=U^{\prime} \notin \mathfrak{d}(Z)$ then $U \notin Z$.  Now,  since $W^{\prime\prime} \in \mathfrak{d}(Z)$, there exists a directed path $\delta$ in $\mathcal{D}$ from $Y^{\prime\prime}$ to $W^{\prime\prime}$,  and hence to $U^{\prime}$, that does not intersect $X^{\prime}$ or $X^{\prime\prime}$ for $X\in Z$. 
In particular,  this implies that there is a path $\eta$ connecting $U$ and $Y$ in  $\mathcal{H}^{1}$ that does not intersect any vertices in $Z$.   Let $\nu$ be a path connecting $U$ and $A$  in $\mathcal{H}^{1}$.  Since $Z$ is a separator in $\mathcal{H}^{1}$, $\nu$ has to intersect $Z$.  Let $R$ be the vertex in $Z$ that lies closest to $U$ in $\nu$.   The sub-path of $\nu$ that goes from $U$ to $R$ corresponds to a directed path $\kappa$ from $U^{\prime}$ to $R^{\prime}$ in $ \mathcal{D}$.  Joining $\delta$ and $\kappa$ we get a directed path from $Y^{\prime\prime}$ to $R^{\prime}$ that does not intersect $X^{\prime}$ or $X^{\prime\prime}$ for $X\in Z\setminus \lbrace R \rbrace$.  Since $U^{\prime}$ lies on that path,  we get that $Q=U^{\prime}\in \mathfrak{d}(Z)$,  which is a contradiction.
\end{itemize}
We have thus shown that $(B,Q) \in  \lbrace (W^{\prime},  W^{\prime\prime}):  W\in Z\rbrace$,  finishing the proof of the lemma.
\end{proof}

\begin{lemma}\label{lemma:finite_cap}
There exists a cut in $\mathcal{D}$ with finite capacity.
\end{lemma}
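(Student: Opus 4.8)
The plan is to exhibit a single explicit cut of finite capacity, namely the image under $\mathfrak{d}$ of a minimal separator. By the standing assumption there is at least one $L-N$ adjustment set in $\mathcal{G}$, hence (by the characterization underlying Lemma \ref{lemma:characterize_mincot}, i.e.\ Proposition 2 of \cite{us_bka}) at least one $A-Y$ separator in $\mathcal{H}^{1}$; deleting vertices while preserving the separating property yields a minimal $A-Y$ separator $Z$. I would then set $S=\mathfrak{d}(Z)$ and show that $S$ is a cut whose capacity equals $c(Z)$, which is finite because $Z$ is a finite set of vertices, each carrying a finite positive cost.

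The capacity bookkeeping is short. By construction $Y^{\prime\prime}\in\mathfrak{d}(Z)$, so the candidate cut contains the source. Lemma \ref{lemma:equal_crossings} pins down the edges leaving the cut exactly, $(\mathfrak{d}(Z),\overline{\mathfrak{d}(Z)})=\{(W^{\prime},W^{\prime\prime}):W\in Z\}$, so these are all internal edges. Since a separator is disjoint from $\{A,Y\}$, every $W\in Z$ satisfies $W\neq A,Y$, and hence the internal edge $W^{\prime}\to W^{\prime\prime}$ has finite capacity $c(W)$ rather than infinity. Summing gives $k\{\mathfrak{d}(Z)\}=\sum_{W\in Z}c(W)=c(Z)<\infty$.

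The step that requires genuine work is confirming that $\mathfrak{d}(Z)$ is really a cut, i.e.\ that it excludes the sink $A^{\prime}$; this is the main obstacle. I would argue by contradiction. If $A^{\prime}\in\mathfrak{d}(Z)$, then $A^{\prime}$ lies on a directed path $\delta$ in $\mathcal{D}$ from $Y^{\prime\prime}$ to some $W^{\prime}$ with $W\in Z$, where $\delta$ avoids $U^{\prime},U^{\prime\prime}$ for every $U\in Z\setminus\{W\}$ and does not reach $W^{\prime\prime}$. Reading off the alternation of external and internal edges, $\delta$ corresponds to an undirected path $Y-V_{1}-\cdots-V_{m-1}-W$ in $\mathcal{H}^{1}$ whose only vertex of $Z$ is its endpoint $W$ (recall $A\notin Z$, so $A\neq W$). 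Because $A^{\prime}$, and therefore also $A^{\prime\prime}$, lie on $\delta$, the vertex $A$ occurs strictly before $W$ along this undirected path, so its initial segment is a $Y$--$A$ path in $\mathcal{H}^{1}$ containing no vertex of $Z$, contradicting that $Z$ separates $A$ from $Y$. Hence $A^{\prime}\notin\mathfrak{d}(Z)$, $\mathfrak{d}(Z)$ is a cut, and the computation above shows its capacity is the finite number $c(Z)$. (If the minimal separator $Z$ is empty, so that $A$ and $Y$ lie in different connected components of $\mathcal{H}^{1}$, I would instead take $S$ to be the set of all vertices reachable from $Y^{\prime\prime}$ in $\mathcal{D}$: then $A^{\prime}\notin S$ and $(S,\overline{S})=\emptyset$, giving a cut of capacity $0$.)
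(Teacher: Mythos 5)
Your proof is correct, but it takes a genuinely different and heavier route than the paper's. The paper simply exhibits one explicit cut, $S=\lbrace Y^{\prime}, Y^{\prime\prime}\rbrace \cup \lbrace W^{\prime}: W \neq A\rbrace$: it contains the source, excludes the sink, and the only edges leaving it are internal edges $W^{\prime}\to W^{\prime\prime}$ with $W\neq A,Y$, all of finite capacity (the external edges out of $Y^{\prime\prime}$ stay inside $S$ because $A$ and $Y$ are non-adjacent in $\mathcal{H}^{1}$, which, like your argument, ultimately rests on the standing assumption that an $L-N$ adjustment set, hence a separator, exists). You instead extract a minimal separator $Z$ from that standing assumption, map it through $\mathfrak{d}$, and invoke Lemma \ref{lemma:equal_crossings}; in doing so you essentially re-prove part one of Proposition \ref{prop:equiv_cut_sep} (the cut property of $\mathfrak{d}(Z)$, via the same path-lifting contradiction the paper uses there, together with $k\lbrace\mathfrak{d}(Z)\rbrace=c(Z)$) inside this lemma. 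That is logically sound and order-consistent, since Lemma \ref{lemma:equal_crossings} precedes this lemma in the appendix and does not depend on it, and your capacity bookkeeping correctly uses that a separator is disjoint from $\lbrace A,Y\rbrace$ so no infinite internal edge is charged; but it is redundant relative to the paper's organization, which proves exactly those facts immediately after this lemma. One genuine merit of your version is the explicit treatment of the degenerate case $Z=\emptyset$ (when $A$ and $Y$ are disconnected in $\mathcal{H}^{1}$), where Lemma \ref{lemma:equal_crossings} is delicate because its proof tacitly takes a path $\nu$ from $U$ to $A$ that need not exist in a disconnected graph; your fallback cut of all vertices reachable from $Y^{\prime\prime}$, with $(S,\overline{S})=\emptyset$, covers this cleanly, whereas the paper's explicit cut handles all cases uniformly with no extra work. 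In short, both arguments deliver the same lemma: the paper's is shorter and self-contained, while yours pre-validates the separator-to-cut correspondence at the cost of extra machinery and an extra case split.
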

\begin{proof}
Let
$$
S = \lbrace Y^{\prime},  Y^{\prime\prime} \rbrace \cup \lbrace W^{\prime}: W\neq A \rbrace.
$$
This is a set of vertices of $\mathcal{D}$ that contains $Y^{\prime\prime}$ and does not contain $A^{\prime}$ and hence it is a cut.  Its capacity is given by the sum of the capacities of all internal edges,  except $Y^{\prime}\to Y^{\prime\prime}$ and $A^{\prime}\to A^{\prime\prime}$.  Since these edges all have finite capacity,  the capacity of $S$ is finite,  which is what we wanted to show.
\end{proof}

We are now ready to prove Proposition \ref{prop:equiv_cut_sep}.
\begin{proof}[Proof of Proposition \ref{prop:equiv_cut_sep}]
We begin with the proof of the first assertion.  $\mathfrak{d}(Z)$ contains $Y^{\prime\prime}$ by definition.  We will show it does not contain $A^{\prime}$,  which will prove that $\mathfrak{d}(Z)$ is a cut.  Suppose for the sake of contradiction that $A^{\prime} \in \mathfrak{d}(Z)$.  Then there exists a directed path in $\mathcal{D}$ from $Y^{\prime\prime}$ to $A^{\prime}$ that does not intersect any vertices $W^{\prime}$ or $W^{\prime\prime}$ for $W\in Z$.  This implies that there exists a path in $\mathcal{H}^{1}$ connecting $Y$ to $A$ that does not intersect $Z$,  which contradicts the assumption that $Z$ is a separator in $\mathcal{H}^{1}$.  Thus,  $\mathfrak{d}(Z)$ is a cut.  
The fact that $k(\mathfrak{d}(Z))=c(Z)$ follows from Lemma \ref{lemma:equal_crossings}.

Next,  we prove the second part of the proposition.   We will first prove that $\mathfrak{h}(S)$ is a separator.  If $Y$ and $A$ are not connected in $\mathcal{H}^{1}$ then $\mathfrak{h}(S)$ is trivially a separator. Suppose then that there exists a path $\delta$ that connects $Y$ and $A$ in $\mathcal{H}^{1}$.  Such a path corresponds to a directed path from $Y^{\prime\prime}$ to $A^{\prime}$ in $\mathcal{D}$.  Since $S$ has finite capacity,  any such path  must contain an edge $W^{\prime}\rightarrow W^{\prime\prime}$ for some $W^{\prime}\in S$.  This implies that $\delta$ intersects  $W\in \mathfrak{h}(S)$,  which is what we wanted to show.  The claim that $c(\mathfrak{h}(S))=k(S)$ follows immediately from the definition of $\mathfrak{h}(S)$. 


Next,  we prove part three of the proposition.  Let $Z$ be a minimum cost separator.  Then it is a minimal separator,  and thus by part one $\mathfrak{d}(Z)$ is a cut with $k\left\lbrace \mathfrak{d}(Z) \right\rbrace = c(Z)$.  Suppose,  for the sake of contradiction,  that $\mathfrak{d}(Z)$ is not a min-cut and hence that there exists a cut $S$  in $\mathcal{D}$ such that $k(S) < k\left\lbrace \mathfrak{d}(Z) \right\rbrace$.  Since $S$ has finite capacity,  part two of the proposition implies that $\mathfrak{h}(S)$ is a separator with $c\lbrace \mathfrak{h}(S)\rbrace =k(S)$.  But then $c\lbrace \mathfrak{h}(S)\rbrace=k(S)< k\left\lbrace \mathfrak{d}(Z) \right\rbrace = c(Z)$,  contradicting the assumption that $Z$ was a minimum cost separator.  Thus,  it must be that $ \mathfrak{d}(Z) $ is a min-cut.

Turn now to the proof of part four of the proposition.  Let $S$ be a min-cut.  By Lemma \ref{lemma:finite_cap},  $S$ has finite capacity.  Then part two of the proposition implies that $\mathfrak{h}(S)$ is a separator with $c\lbrace \mathfrak{h}(S)\rbrace = k(S) $. Suppose,  for the sake of contradiction,  that $\mathfrak{h}(S)$ is not a minimum cost separator,  and hence that there exists a minimum cost separator $Z$ in $\mathcal{H}^{1}$ that satisfies $c(Z) < c\lbrace \mathfrak{h}(S)\rbrace$.  Since $Z$ is a minimal separator,  part one of the proposition implies that $\mathfrak{d}(Z)$ is a cut with $k\left\lbrace \mathfrak{d}(Z) \right\rbrace = c(Z)$. But then $k\left\lbrace \mathfrak{d}(Z) \right\rbrace = c(Z) < c\lbrace \mathfrak{h}(S)\rbrace = k(S)$,  contradicting the assumption that $S$ was a min-cut.  Thus,  it must be that $\mathfrak{h}(S)$ is a minimum cost separator.

Finally,  we prove the fifth part of the proposition.  We begin by showing that $Z \subset \mathfrak{h} \left\lbrace \mathfrak{d} \left( Z\right)  \right\rbrace$.  Take $W\in Z$.  We showed in Lemma \ref{lemma:equal_crossings} that $W^{\prime}\in  \mathfrak{d} \left( Z\right)$ and $W^{\prime\prime}\notin  \mathfrak{d} \left( Z\right)$.  Thus $W \in \mathfrak{h} \left\lbrace \mathfrak{d} \left( Z\right) \right\rbrace$.  Now we will show that $Z \supset \mathfrak{h} \left\lbrace \mathfrak{d} \left( Z\right)  \right\rbrace$.  Take $W\in \mathfrak{h} \left\lbrace \mathfrak{d} \left( Z\right)  \right\rbrace$.  Then $W^{\prime}\in  \mathfrak{d} \left( Z\right)$ and $W^{\prime\prime}\notin  \mathfrak{d} \left( Z\right)$.   Assume,  for the sake of contradiction,  that $W\notin Z$.  Since $W^{\prime}\in  \mathfrak{d} \left( Z\right)$  there exists in $\mathcal{D}$ a directed path $\delta$ from $Y^{\prime\prime}$ to $U^{\prime}$ for some $U\in Z$,  such that $\delta$ does not intersect any vertices $X^{\prime}$ or $X^{\prime\prime}$ for $X\in Z \setminus \lbrace U \rbrace$ and such that $W^{\prime}$ lies on $\delta$.  But since $\delta$ reaches  $U^{\prime}$,  it has to go through $W^{\prime\prime}$ too,  implying that $W^{\prime\prime}\in  \mathfrak{d} \left( Z\right)$,  which is a contradiction.  Thus,  it must be that $W\in Z$.
\end{proof}

\begin{proof}[Proof of Proposition \ref{prop:subset_implies_order}]
Take $W \in \mathfrak{h}(S^{\prime}) \setminus \mathfrak{h}(S)$ and a path $\delta$ in $\mathcal{H}^{1}$ connecting $W$ to $Y$.  We need to show that $\delta$ intersects  $\mathfrak{h}(S)$.  Now,  in $\mathcal{D}$ there is a path corresponding to $\delta$ of the form
$$
Y^{\prime\prime} \to U_{1}^{\prime} \to U_{1}^{\prime\prime} \to \dots \to  U_{l}^{\prime} \to U_{l}^{\prime\prime} \to W^{\prime} \to W^{\prime\prime}.
$$
Since $S$ is a cut,  $Y^{\prime\prime} \in S$.  Since $S$ is a min-cut,  by Lemma \ref{lemma:finite_cap} it has a finite capacity,  and thus it must be that  $U_{1}^{\prime}\in S$,  because the edge $Y^{\prime\prime} \to U_{1}^{\prime}$ has infinite capacity.  
 If $U_{1}^{\prime\prime}\notin  S$ then $U_{1}\in \mathfrak{h}(S)$ and we are done.  If $U_{1}^{\prime\prime}\in  S$,  since $S$ has finite capacity and the edge $U_{1}^{\prime\prime} \to U_{2}^{\prime} $ has infinite capacity it must be that $U_{2}^{\prime}\in S$.  We now repeat the same argument as before.  If at some point we find that 
 $U_{j}^{\prime}\in S$ and $U_{j}^{\prime\prime}\notin S$ then $U_{j}\in \mathfrak{h}(S)$ and we are done.  Otherwise all of $U_{1}^{\prime},U_{1}^{\prime\prime},\dots,  U_{l}^{\prime},U_{l}^{\prime\prime}$ are in $S$.  We will show that this cannot happen.  Assume it does.  Since the edge $U_{l}^{\prime\prime} \to W^{\prime}$ has infinite capacity,  it must be that $W^{\prime} \in S$.  If $W^{\prime\prime} \in S$,  since $S\subset S^{\prime}$ we conclude that $W^{\prime}$ and $W^{\prime\prime}$ are both in $S^{\prime}$,  which contradicts the assumption that $W\in \mathfrak{h}(S^{\prime})$.  Hence it must be that  $W^{\prime\prime} \notin S$,  but this implies that $W\in \mathfrak{h}(S)$,  which is a contradiction.

\end{proof}

The following lemma is a straightforward consequence of well known results in the theory of flow networks.  We include it here for completeness sake,  since we will need it in the proof of Proposition \ref{prop:Sc_smallest}.
\begin{lemma}\label{lemma:misc_flow_results}
Let $S$ be a cut.  Then $S$ is a min-cut if and only if it holds that for all $e \in (S,\overline{S})$,  $f^{\ast}(e)=k(e)$ and for all $e \in (\overline{S},S)$, $f^{\ast}(e)=0$.
\end{lemma}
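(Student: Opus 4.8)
The plan is to deduce the equivalence from two standard facts about flow networks \citep{even}, specialized to the fixed maximum flow $f^{\ast}$. Write $|f|$ for the total flow of a flow $f$, and for disjoint vertex sets $P,Q$ abbreviate $f(P,Q) \equiv \sum_{e \in (P,Q)} f(e)$. The first ingredient is the cut--flow identity: for every cut $S$ and every flow $f$,
$$
|f| = f(S,\overline{S}) - f(\overline{S},S),
$$
which is obtained by summing the flow-conservation constraints over the vertices of $S$ (the internal edges cancel and the non-terminal vertices contribute nothing). The second ingredient is the max-flow min-cut theorem, which gives $|f^{\ast}| = \min_{S'} k(S')$ because $f^{\ast}$ is a max-flow.

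Combining the identity with the per-edge bounds $0 \le f^{\ast}(e) \le k(e)$ from the definition of a flow, I obtain for an arbitrary cut $S$ the chain
$$
|f^{\ast}| = f^{\ast}(S,\overline{S}) - f^{\ast}(\overline{S},S) \le f^{\ast}(S,\overline{S}) \le \sum_{e \in (S,\overline{S})} k(e) = k(S).
$$
Both directions of the lemma then amount to tracking when this chain is tight.

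For the forward implication I assume $S$ is a min-cut, so that $k(S) = \min_{S'} k(S') = |f^{\ast}|$ by the max-flow min-cut theorem; hence both inequalities in the chain are equalities. Equality in the second, together with $f^{\ast}(e) \le k(e)$ applied edge by edge, forces $f^{\ast}(e) = k(e)$ for every $e \in (S,\overline{S})$, while equality in the first gives $f^{\ast}(\overline{S},S) = 0$, which by $f^{\ast}(e) \ge 0$ edge by edge forces $f^{\ast}(e)=0$ for every $e \in (\overline{S},S)$. For the converse I assume the two saturation conditions; then $f^{\ast}(S,\overline{S}) = k(S)$ and $f^{\ast}(\overline{S},S)=0$, so the chain collapses to $|f^{\ast}| = k(S)$, and since $|f^{\ast}| = \min_{S'} k(S')$ this says $S$ attains the minimum capacity, i.e. $S$ is a min-cut.

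There is no genuine obstacle in this argument; it is the assembly of the two ``well known results'' named in the statement. The only point requiring care is the passage from the two aggregate equalities ($f^{\ast}(S,\overline{S})=\sum_{e\in(S,\overline{S})}k(e)$ and $f^{\ast}(\overline{S},S)=0$) to the edge-by-edge conclusions, which is precisely where the term-wise bounds $0 \le f^{\ast}(e) \le k(e)$ are used.
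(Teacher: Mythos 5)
Your proof is correct and follows essentially the same route as the paper's: both rest on the flow decomposition across a cut (Lemma 5.1 of \cite{even}) combined with the max-flow min-cut theorem, with the edge-wise conclusions extracted from the term-wise bounds $0 \le f^{\ast}(e) \le k(e)$. Your explicit chain-of-inequalities presentation is just a slightly more streamlined packaging of the identical argument.
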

\begin{proof}

We begin by noting the following.   By Lemma 5.1 of \cite{even},  the total flow of $f^{\ast}$ satisfies
\begin{equation}
F^{\ast} = 	\sum\limits_{e\in (S,\overline{S})} f^{\ast}(e) - 	\sum\limits_{e\in (\overline{S},  S)} f^{\ast}(e).
\label{eq:flow_decomp}
\end{equation}
and for all edges $e$ it holds that
\begin{equation}
0\leq f^{\ast}(e)\leq k(e).
\label{eq:valid_flow}
\end{equation}

Assume first that $S$ is a min-cut.
By the max-flow min-cut theorem (see Theorem 5.1 of \cite{even}),  $F^{\ast}$ satisfies
\begin{equation}
F^{\ast} = \sum\limits_{e\in (S,\overline{S})} k(e).
\label{eq:maxflow_mincut}
\end{equation}
It follows from \eqref{eq:flow_decomp}, \eqref{eq:valid_flow} and \eqref{eq:maxflow_mincut} that if $e \in (S,\overline{S})$,  $f^{\ast}(e)=k(e)$ and if $e \in (\overline{S},S)$, $f^{\ast}(e)=0$,  which is what we wanted to show.

Now assume that that  if $e \in (S,\overline{S})$,  $f^{\ast}(e)=k(e)$ and if $e \in (\overline{S},S)$, $f^{\ast}(e)=0$.  Then, by  \eqref{eq:flow_decomp}, the total flow of 
$f^{\ast}$ satisfies \eqref{eq:maxflow_mincut}.  
The max-flow min-cut theorem then implies that $S$ is a min-cut,  which is what we wanted to show.
\end{proof}

\begin{proof}[Proof of Proposition \ref{prop:Sc_smallest}]
We first show that $S_c$ is a cut.  We need to show that $Y^{\prime\prime}\in S_{c}$ and $A^{\prime}\notin S_{c}$.  That $Y^{\prime\prime}\in S_{c}$  follows from the definition of $S_{c}$.  On the other hand,  since $f^{\ast}$ is a max-flow there can be no paths from $Y^{\prime\prime}$ to $A^{\prime}$ that are augmenting for $f^{\ast}$,  since if there were,  the total flow of $f^{\ast}$ could be increased.  Thus,  $A^{\prime}\notin S_{c}$.  

Next,  we show that $S_c$ is a min-cut.  By Lemma \ref{lemma:characterize_mincot},  it suffices to show that if $e \in (S_{c},\overline{S}_{c})$ then $f^{\ast}(e)=k(e)$ and if $e \in (\overline{S}_{c},S_{c})$ then $f^{\ast}(e)=0$.  Take $W\in S_{c}$ and $U\in \overline{S}_{c}$.  Since $W\in S_{c}$,  there exists a path $\delta$ from $Y^{\prime\prime}$ to $W$ that is augmenting for $f^{\ast}$.  Suppose $e=(W,  U)$ is an edge in $\mathcal{D}$.  
  Then $f^{\ast}(e)=k(e)$,  because if $f^{\ast}(e)<k(e)$ the path obtained by joining $\delta$ and $e$ would be a path from $Y^{\prime\prime}$ to $U$ that is augmenting for $f^{\ast}$,  implying that $U\in S_{c}$,  which is a contradiction.  Suppose that $e=(U,  W)$ is an edge in $\mathcal{D}$. Then $f^{\ast}(e)=0$,  because if $f^{\ast}(e)>0$ the path obtained by joining $\delta$ and $e$ would be a path from $Y^{\prime\prime}$ to $U$ that is augmenting for $f^{\ast}$,  implying that $U\in S_{c}$,  which is a contradiction.  We have thus shown that 
$S_c$ is a min-cut.

Now take any other min-cut $S$.  We will show that $S_c \subset S$.
Take $U \in S_{c}$.  We need to show that $U\in S$.  Since $U\in S_{c}$, there exists a path $\delta$ from $Y^{\prime\prime}$ to $U$ in $\mathcal{D}$ that is augmenting for $f^{\ast}$.  Suppose  the vertices in $\delta$ are
$
Y^{\prime\prime},  W_{1},  W_{2} \dots,  W_{l},  W_{l+1}=U.
$
Since $S$ is a cut,  we have that $Y^{\prime\prime} \in S$.  
We will show that $W_{i} \in S$ for all $i=1, \dots, l+1$ by induction. 
Let $e_1$ be the edge joining $Y^{\prime\prime}$ and $W_{1}$ in $\delta$.  Since $\delta$ is augmenting for $f^{\ast}$,  we have that if $e_1=(Y^{\prime\prime}, W_1)$ then $f^{\ast}(e_1)<k(e_1)$ whereas if 
$e_1=(W_1,  Y^{\prime\prime})$ then $f^{\ast}(e_1)>0$.  Since $S$ is a min-cut,  Lemma \ref{lemma:misc_flow_results} implies that $W_{1}\in S$.  Now,  suppose that for some $1 \leq i < l+1$ it holds that $W_{i}\in S$.  Let $e_{i+1}$ be the edge joining $W_{i}$ and $W_{i+1}$ in $\delta$. 
 Since $\delta$ is augmenting for $f^{\ast}$,  we have that if $e_{i+1}=(W_{i}, W_{i+1})$ then $f^{\ast}(e_{i+1})<k(e_{i+1})$ whereas if 
$e_{i+1}=(W_{i+1}, W_{i})$ then $f^{\ast}(e_{i+1})>0$.   Since $S$ is a min-cut,  Lemma \ref{lemma:misc_flow_results} implies that $W_{i+1}\in S$.  This finishes the proof of the proposition.
\end{proof}

\begin{proof}[Proof of Theorem \ref{theo:main_theo}]
By Proposition \ref{prop:Sc_smallest},  $S_c$ is a min-cut.  Thus, by part four of Proposition \ref{prop:equiv_cut_sep},  $O_c=\mathfrak{h}(S_c)$ is a minimum cost separator in $\mathcal{H}^{1}$.  Lemma \ref{lemma:characterize_mincot} implies that $O_c$ is a minimum cost $L-N$ adjustment set.  

Now,  let $Z$ be any other minimum cost $L-N$ adjustment set.  We will show that  $O_c \preceq_{L} Z$.   Lemma \ref{lemma:characterize_mincot} implies that $Z$ is a minimum cost separator in $\mathcal{H}^{1}$. 
By part three of Proposition \ref{prop:equiv_cut_sep}, $\mathfrak{d}(Z)$ is a min-cut in $\mathcal{D}$.  Thus,  Proposition \ref{prop:Sc_smallest}   implies that $S_c \subset \mathfrak{d}(Z)$.  Proposition \ref{prop:subset_implies_order} implies that $\mathfrak{h}(S_c) \unlhd_{\mathcal{H}^{1}}\mathfrak{h} \left\lbrace  \mathfrak{d}(Z) \right\rbrace$.  But part five of Proposition \ref{prop:equiv_cut_sep} establishes that $ \mathfrak{h} \left\lbrace  \mathfrak{d}(Z) \right\rbrace= Z$.  We have shown that
$O_c \unlhd_{\mathcal{H}^{1}} Z$,  which by Lemma \ref{lemma_order_implies_var} implies that $O_c \preceq_{L} Z$. This finishes the proof of the theorem.
\end{proof}

\bibliographystyle{apalike}
\bibliography{weighted}

\end{document}